\DeclareMathAlphabet{\mathpzc}{OT1}{pzc}{m}{it}
\newtheorem{te}{Theorem}[section]
\newtheorem{os}[te]{Remark}
\newtheorem{prop}[te]{Proposition}
\newtheorem{ex}[te]{Example}
\numberwithin{equation}{section}
\begin{document}
\title[]{ Random time-change with inverses of multivariate subordinators:
governing equations and fractional dynamics}
\keywords{Random time-change, multivariate L\'evy processes, subordinators,
anomalous diffusion, continuous time random walks, fractional operators.\\
\\
* Dipartimento di Scienze Statistiche, Universit\`a di Roma Sapienza,
Piazzale Aldo Moro 5, 00185, Roma, Italy\\
** Dipartimento Matematica, Universit\`a di Roma Tor Vergata, viale della
Ricerca Scientifica 1, 00133, Roma, Italy }
\date{\today }
\subjclass[2010]{60G52, 60G22, 60F99, 60K40}
\author[]{Luisa Beghin*}
\author[]{Claudio Macci**}
\author[]{Costantino Ricciuti*}

\begin{abstract}
It is well-known that compositions of Markov processes with inverse
subordinators are governed by integro-differential equations of generalized
fractional type. This kind of processes are of wide interest in statistical
physics as they are connected to anomalous diffusions. In this paper we
consider a generalization; more precisely we mean componentwise compositions
of $\mathbb{R}^d$-valued Markov processes with the components of an
independent multivariate inverse subordinator. As a possible application, we
present a model of anomalous diffusion in anisotropic medium, which is
obtained as a weak limit of suitable continuous-time random walks.
\end{abstract}

\maketitle
\tableofcontents

{}

{}

\section{Introduction}

A multivariate subordinator is a $\mathbb{R}^d$-valued L\'evy process with
non-decreasing (possibly dependent) marginal components. In \cite{sato}, the
authors studied the subordinated process $(X_1(H_1(t)), \dots, X_d(H_d(t)))$%
, where $(X_1 (t),\dots, X_d (t))$ is a time-homogeneous $\mathbb{R}^d$%
-valued Markov process and $(H_1 (t),\dots, H_d (t))$ is an independent
multivariate subordinator. This generalized the well-known case of a common
random time $H(t)$ for all the components (for further developments consult
\cite{pedersen 1}, \cite{pedersen 2}, \cite{pedersen 3}). Time-changes of
Markov processes with multivariate subordinators also appear in some
references in the literature with applications to finance; see e.g. \cite%
{LucianoSemeraro}.

In the same spirit, this paper focuses on the time-changed process
\begin{align}
\bigl (X_1(L_1(t)), \dots, X_d(L_d(t)) \bigr), \qquad t\geq 0
\label{processo subordinato nostro}
\end{align}
where $(X_1(t), \dots, X_d(t))$ is a $\mathbb{R}^d$-valued Markov process,
while, for each $j=1, \dots, d$, $L_j(t)$ is the inverse (or
right-continuous hitting time) of $H_j(t)$, i.e.
\begin{align*}
L_j (t)=\inf \{x>0: H_j(x)>t\},
\end{align*}
which is assumed to be independent of $(X_1(t), \dots, X_d(t))$. Without
loss of generality, we assume that $(X_1(0), \dots, X_d(0))=(0,\dots,0)$
almost surely.

The literature inspiring our study is vast. Indeed many works, such as \cite%
{becker}, \cite{kolokoltsov}, \cite{meer libro}, \cite{meer spa}, \cite{meer
jap}, \cite{meer straka}, \cite{meer toaldo}, concern processes of the form
\begin{align}
(X_1(L(t)), \dots, X_d(L(t)), \qquad t\geq 0,
\label{processo subordinato classico}
\end{align}
where each component of the original process is time-changed by the same
random time $L (t)=\inf \{x>0: H(x)>t\}$, namely the inverse of the same
univariate subordinator.

There exists a well established theory for the stochastic process in (\ref%
{processo subordinato classico}). An interesting fact is that the density $%
p(x,t)$ of (\ref{processo subordinato classico}), $x\in \mathbb{R}^d$, is
governed by integro-differential equations of the form
\begin{align}
\mathcal{D}_t p(x,t)= \mathcal{G}p(x,t), \qquad x\neq 0,  \label{eq 1 intro}
\end{align}
where $\mathcal{D}_t$ is the operator defined by
\begin{align}
\mathcal{D}_t h(t):= \int _0^\infty (h(t)-h(t-\tau))\nu(d\tau)
\label{operatore univariato}
\end{align}
$\nu$ is the L\'evy measure of the underlying subordinator $H(t)$ and $%
\mathcal{G}$ is the dual to the generator of $(X_1 (t), \dots, X_d(t))$. The
operator $\mathcal{D}_t$ is called generalized fractional derivative
because, when $L(t)$ is an inverse stable subordinator of index $\alpha \in
(0,1)$, it reduces to the Caputo fractional derivative
\begin{align}
\frac{d^\alpha}{dt^\alpha} f(t)= \int _0^\infty \frac{d}{d\tau}f(\tau) \frac{%
(t-\tau)^{-\alpha -1}}{\Gamma (1-\alpha)}d\tau  \label{Caputo}
\end{align}
(see e.g. \cite{kilbas} p. 92 for details). Moreover, if $(X_1(t), \dots,
X_d(t))$ is a L\'evy process, then processes defined in (\ref{processo
subordinato classico}) can be seen as scaling limits of suitable continuous
time random walks (hereafter CTRW). We recall that a CTRW is defined by a
sequence of i.i.d. jumps $Y_i\in \mathbb{R}^d$ separated by i.i.d.
inter-arrival times $J_i\in \mathbb{R}^+$.

In the special case where $(X_1(t), \dots, X_d(t))$ is a Brownian motion and
$L(t)$ is the inverse of a stable subordinator, then the process (\ref%
{processo subordinato classico}) is a so-called subdiffusion, which has
great interest in many areas of statistical physics (on this point, consult
e.g. \cite{magdziarz}, \cite{magdziarz 2} and \cite{metzler}). See also \cite%
{kumar} and \cite{kumar2}, for the more general case where the external
process is the fractional Brownian motion. Basically, a subdiffusion models
the case where the particle is subject to a trapping effect which delays the
motion with respect to the simple Brownian process; clearly, in this model
it is assumed that the trapping effect is the same in all directions (the
external medium is isotropic) and thus the time-changed process is isotropic
as well as the Brownian motion. It is natural, then, to ask what happens if
the motion of the particle takes place in an anisotropic medium, where the
trapping effect is different depending on the direction. This is the
strongest physical argument that inspired our investigation of processes in (%
\ref{processo subordinato nostro}).

To our knowledge, the study of the model (\ref{processo subordinato nostro})
is a completely new problem. In this paper we restrict our analysis to the
case $d=2$. Our choice is motivated by technical problems; indeed, some of
the results presented in this paper hold only for $d=2$, and, moreover, some
calculations are quite cumbersome even with this restriction. However, we
are sure that the discussion of the 2-dimensional case, besides making the
present exposure clearer, will be useful for possible future studies of the
multidimensional counterpart.

Now we give a brief description of the main results and the outline of the
paper. In Section 2 we recall the definition of bivariate subordinators and
we find some auxiliary results. A crucial part of the paper is Section 3,
where we study in depth some distributional properties and a governing
equation of the biparameter process $\bigl (L_1(t_1), L_2(t_2) \bigr )$; in
this way we present an extension of the well-known theory of inverse
subordinators to the 2-dimensional case. In Section 4 we focus on the
biparameter process $\bigl (X_1(L_1(t_1)), X_2(L_2(t_2)) \bigr )$ and we
prove that its density $p(x_1, x_2, t_1, t_2)$ solves an equation of the
following form
\begin{align}
\mathcal{D}_{t_1, t_2} p(x_1, x_2, t_1, t_2) = \mathcal{G} p(x_1, x_2, t_1,
t_2), \qquad x_1, x_2\neq 0,  \label{eq 2 intro}
\end{align}
where
\begin{align*}
\mathcal{D}_{t_1, t_2} h(t_1, t_2) := \int _0^\infty \int _0^\infty \bigl ( %
h(t_1, t_2) - h(t_1-\tau _1, t_2-\tau _2) \bigr ) \phi (d\tau _1,d\tau _2),
\end{align*}
$\phi$ is the L\'evy measure of the underlying bi-dimensional subordinator $%
(H_1(t_1), H_2(t_2))$ and $\mathcal{G}$ is the dual to the generator of $%
(X_1(t), X_2(t))$. The operator $\mathcal{D}_{t_1, t_2}$ in (\ref{eq 2 intro}%
) is a bi-dimensional version of the generalized fractional derivative
appearing in (\ref{eq 1 intro}); it was already introduced in \cite%
{meerschaert benson} in advection-dispersion equations governing
multidimensional stable L\'evy motions, though acting on the space variables.

Finally, in Section 5, we focus on the special case where $(X_1(t), X_2(t))$
is a Brownian motion $(B_1(t), B_2(t))$, while $\bigl (L_1(t_1)), L_2(t_2) %
\bigr )$ is the inverse of a bivariate stable subordinator. In particular,
we illustrate how to construct a CTRW converging to $\bigl (B_1(L_1(t)),
B_2(L_2(t)) \bigr)$ under a suitable scaling limit.

\vspace{0.1cm}

\textit{Notation:} For simplicity, throughout the paper we will often denote
a process $\{X(t), t\geq 0\}$ by $X(t)$; moreover, we will often denote a
bi-paramenter process $\{(X(t_1), Y(t_2)), t_1\geq 0, t_2\geq 0\}$ by $%
(X(t_1), Y(t_2))$.

\section{Bivariate subordinators}

This section is devoted to bivariate subordinators in the sense of \cite%
{sato}. In Subsection \ref{subsection d}, we review some known facts and we
connect them to the general theory of multivariate L\'evy processes and
their governing equation (see \cite{meer libro}, Chapt. 6, and also \cite%
{garra} and \cite{meerschaert benson} for the special case of multivariate
stable processes). Then, in Subsection \ref{subsection a} we will present
some original results.

\subsection{Preliminaries}

\label{subsection d} A L\'evy process $\{(H_1 (t), H_2(t)), t\geq 0\}$ is
said to be a bivariate subordinator if its components are both a.s.
non-decreasing; we restrict our attention to the case of pure jump
subordinators, having no drift. The characteristic function of $(H_1 (t),
H_2(t))$ can be written (see \cite{sato}) as
\begin{align}
\mathbb{E}e^{i (\alpha H_1(t) +\beta H_2 (t))} = e^{t \sigma (\alpha,
\beta)}, \qquad \alpha, \beta \in \mathbb{R},
\label{funzione caratteristica subordinatore bivariato}
\end{align}
where
\begin{align}
\sigma (\alpha, \beta)= \int \int _{\mathbb{R}_2^+} (e^{i (\alpha x_1+\beta
x_2)}-1) \phi (dx_1,dx_2)
\label{esponente caratteristico bivariate subordinator}
\end{align}
and $\phi (dx_1,dx_2)$ denotes the L\'evy measure; thus $\phi$ is a measure
on $\mathbb{R}_+^2 = \{ (x_1, x_2)\in \mathbb{R}^2: x_1\geq 0, x_2 \geq 0\}$
such that
\begin{align*}
\int \int _{\mathbb{R}_2^+} \min (\sqrt{x_1^2+x_2^2},1) \phi
(dx_1,dx_2)<\infty.
\end{align*}

An equivalent formulation can be given in terms of the Laplace transforms,
i.e.
\begin{align}
\mathbb{E} e^{-\eta _1 H_1(t)-\eta _2H_2(t)} = e^{-tS(\eta _1, \eta _2)}
\qquad \eta _1, \eta _2\geq 0,  \label{abc}
\end{align}
where
\begin{align}
S(\eta _1, \eta _2)= \int \int _{\mathbb{R}_2^+} \bigl (1-e^{-\eta
_1x_1-\eta_2 x_2} \bigr)\phi (dx_1,dx_2)
\label{esponente di Laplace bivariato}
\end{align}
is a bivariate Bernstein function (in the sense of \cite{bochner}, Chapt.
4). The marginal processes are univariate subordinators, hence
\begin{align}
\mathbb{E}e^{-\eta _k H_k (t)}= e^{-tT_k(\eta _k)} \qquad \eta _k\geq 0
\qquad k=1,2,  \label{esponente di Laplace univariato 1}
\end{align}
where
\begin{align}
T_k (\eta _k)= \int _0^\infty (1-e^{-\eta _k x}) \nu _k (dx)
\label{esponente di Laplace univariato 2}
\end{align}
is a Bernstein function and $\nu _k$ is the L\'evy measure associated to $%
H_k $. Throughout this paper we always assume that both $H_1$ and $H_2$ have
infinite activity, namely
\begin{align*}
\nu _k [0,\infty)=\infty \qquad k=1,2.
\end{align*}

It is clear that, if $H_1$ and $H_2$ are independent, then the L\'evy
measure $\phi$ is supported on the coordinate axes, i.e. it has the form
\begin{align}
\phi(dx_1, dx_2)= \nu _1 (dx_1)\delta _0 (dx_2)+ \nu _2(dx_2)\delta _0
(dx_1),  \label{misura di levy nel caso di subordinatori indipendenti}
\end{align}
$\delta _0$ denoting the Dirac delta measure; then
\begin{align}
S(\eta _1, \eta _2)= T_1(\eta _1)+T_2(\eta _2).  \label{S=T1+T2}
\end{align}

\begin{os}
\label{rem:levy-copula} In view of some possible applications, we briefly
recall some connections with the theory of copulas. It is well-known that a
copula allows to separate the dependence structure of a random vector from
its univariate marginal distributions. Moreover, since $\phi $ is a measure,
it is possible to define a suitable notion of copula, that is the L\'{e}vy
copula; see e.g. Section 3 in \cite{KallsenTankov} for its definition and
basic properties (in particular Theorem 3.6. in \cite{KallsenTankov} can be
considered as the analogue of the Sklar's Theorem for L\'{e}vy copulas). L%
\'{e}vy copulas characterize the dependence among components of vector
valued L\'{e}vy processes and they are also used in some estimation problems
(see e.g. \cite{EsmaeiliKluppelberg2011} and \cite{EsmaeiliKluppelberg2013}).
\end{os}

In general, if $(H_1(t), H_2(t))$ has joint density $q_*(x_1, x_2, t)$, then
(see \cite{meer libro}, Chapt. 6]) the following equation holds
\begin{align}
\frac{\partial}{\partial t}q_*(x_1,x_2,t)=- \mathcal{D}_{x_1, x_2}q_*(x_1,
x_2, t),  \label{equazione compatta processo diretto}
\end{align}
where
\begin{align}
\mathcal{D}_{x_1, x_2}h(x_1, x_2):= \int \int _{R _+^2}\biggl (h(x_1, x_2) -
h (x_1-y_1, x_2 - y_2) \biggr ) \phi (dy_1, dy_2)  \label{Marchaud}
\end{align}
on a suitable class of functions $h$. The operator (\ref{Marchaud}) can also
be defined by means of its Laplace symbol:
\begin{align}
\int _0^\infty \int _0^\infty e^{-\eta _1x_1-\eta _2x_2} \mathcal{D}_{x_1,
x_2}f(x_1, x_2) dx_1dx_2=S(\eta _1, \eta _2) \hat{f} (\eta _1, \eta _2)
\label{simbolo di laplace}
\end{align}
where $\hat {f}(\eta _1, \eta _2):= \int _0^\infty \int _0^\infty e^{-\eta
_1x_1-\eta _2x_2}f(x_1, x_2) dx_1dx_2$ and $S$ has been defined in (\ref%
{esponente di Laplace bivariato}). Equation (\ref{equazione compatta
processo diretto}) comes up in the more general setting of Proposition \ref%
{equazione diretto} below.

The above facts extend the best known results holding for univariate
subordinators; indeed, for each $i=1,2$, the marginal density $q_i(x,t)$ of $%
H_i(t)$ solves
\begin{equation*}
\frac{\partial}{\partial t}q_i(x,t)=- \mathcal{D}^{(i)}_{x}q_i(x, t),
\end{equation*}
where the operator on the right-hand side is
\begin{align}
\mathcal{D} ^{(i)}_{x} h(x)= \int _0^\infty \bigl (h(x)-h(x-y)\bigr ) %
\nu_i(dy),  \label{op univ}
\end{align}
whose Laplace symbol is defined by
\begin{equation*}
\int _0^{\infty} e^{-\eta x} \mathcal{D} ^{(i)}_{x} h(x) dx= T_i(\eta )
\tilde{h}(\eta ).
\end{equation*}

Now we present two examples of bivariate subordinators.

\begin{ex}
\label{esempio stabile} Let $(H_1 ^\alpha (t), H_2^\alpha (t))$ be a
bivariate stable subordinator of index $\alpha \in (0,1)$, i.e. a bivariate
stable process with non decreasing components. Its L\'evy measure can be
expressed in polar coordinates as
\begin{align}
\phi_{\alpha} (dr,d\theta)=\frac{C\alpha }{r^{1+\alpha}} drM(d\theta) \qquad
0 \leq \theta \leq \frac{\pi}{2},  \label{misura di levy caso stabile}
\end{align}
where $M(d\theta)$ is a measure on the arc of circle $\{(x,y) \in \mathbb{R}%
_+^2 :x^2+y^2=1\}$. If $H_1 ^\alpha (t)$ and $H_2^\alpha (t)$ are
independent, we can write
\begin{equation*}
M(d\theta)= p\delta _0 (d\theta)+ (1-p) \delta _{\frac{\pi}{2}}(d\theta)
\end{equation*}
for some $p\in (0,1)$, where $\delta _x$ denotes the Dirac delta in $x$.

By simple calculations we see that the characteristic exponent (\ref%
{esponente caratteristico bivariate subordinator}) has the form
\begin{align}
\sigma_{\alpha} (\gamma, \beta)& = \int \int _{\mathbb{R}_2^+} (e^{i (\gamma
x_1+\beta x_2)}-1) \phi_{\alpha} (dx_1,dx_2)  \notag \\
&= \int _0 ^{\pi/2} \biggl ( \int_0^\infty (e^{i r(\gamma \cos \theta+\beta
\sin \theta)}-1) \frac{C\alpha}{r^{\alpha+1}} dr \biggr ) M(d\theta)  \notag
\\
&= -C\Gamma (1-\alpha)\int _0^{\pi/2} [- i(\gamma \cos \theta +\beta \sin
\theta)]^\alpha M(d\theta)  \label{simbolo nel caso stabile}
\end{align}
where, for the last equality, we have taken into account the well-known
formula
\begin{align*}
\int _0^\infty (e^{i\xi x}-1) \frac{\alpha dx}{x^{\alpha +1}}=-\Gamma
(1-\alpha) (-i\xi)^\alpha.
\end{align*}
Hence, the Laplace exponent has the form
\begin{align}
S_{\alpha} (\eta _1, \eta _2)= C\Gamma (1-\alpha)\int _0^{\pi/2} \bigl (\eta
_1 \cos \theta + \eta _2 \sin \theta \bigr )^\alpha M(d\theta).  \label{lap}
\end{align}
Observe that $(\eta _1 \cos \theta + \eta _2 \sin \theta)$ is the Laplace
symbol of $(\cos \theta \frac{\partial}{\partial x_1}+ \sin \theta \frac{%
\partial}{\partial x_2})$, which is the directional derivative along the
unit vector $(\cos \theta, \sin \theta )$. Then, by taking the Laplace
inverse of (\ref{lap}), the operator in (\ref{Marchaud}) here reduces to the
following pseudo-differential operator
\begin{align}
\mathcal{D}^\alpha _{x_1,x_2} f(x_1, x_2)= C\Gamma (1-\alpha)\int _0^{\pi/2} %
\bigl ( \cos \theta \frac{\partial}{\partial x_1} + \sin \theta \frac{%
\partial}{\partial x_2} \bigr )^\alpha M(d\theta).  \label{pseudo}
\end{align}
Note that (\ref{pseudo}) represents the average under $M(d\theta)$ of the
fractional power of the directional derivative along $(\cos \theta, \sin
\theta)$. Thus equation (\ref{equazione compatta processo diretto}) takes
the form
\begin{equation*}
\frac{\partial}{\partial t} q_* (x_1, x_2, t)= -\mathcal{D}^\alpha_{x_1,
x_2}q_* (x_1, x_2,t)
\end{equation*}
which is a particular case of the equation governing multivariate stable
processes (on this point consult \cite{garra} and \cite{meerschaert benson}).
\end{ex}

\begin{ex}
Let $Y_1(t)$, $Y_2(t)$ and $Z(t)$ be independent subordinators, and let $%
T_1(\cdot)$, $T_2(\cdot)$ and $G(\cdot)$ be the respective Bernstein
functions. For any $c_1, c _2 \in \mathbb{R}^+$, we consider the bivariate
subordinator
\begin{equation}  \label{semeraro}
\begin{cases}
H_1(t)= Y_1(t)+c _1 Z(t) \\
H_2(t)= Y_2(t)+c _2 Z(t).%
\end{cases}%
\end{equation}
Such example is motivated by the model studied in \cite{Semeraro} and \cite%
{LucianoSemeraro}; here the authors considered multivariate subordinators
such that each component is a sum of an idiosyncratic and a common term,
which has some interest in finance modelling. It is easy to see that (\ref%
{semeraro}) is characterized by the bivariate Bernstein function
\begin{align*}
S(\eta _1, \eta _2)= T_1 (\eta _1)+ T_2(\eta _2) + G(c _1\eta _1+ c _2\eta
_2).
\end{align*}
Then the support of the L\'evy measure is the union of the coordinate axes
and the line with direction $(c _1, c_2)$ passing through the origin; for
the explicit expression of the L\'evy measure consult Prop. 3.1 in \cite%
{Semeraro}.

It is interesting to note that, if $Y_1(t)$, $Y_2(t)$ and $Z(t)$ are stable
subordinators with parameter $\alpha \in (0,1)$, then the bivariate
Bernstein function reads
\begin{equation*}
S(\eta _1, \eta _2)= \eta _1^\alpha + \eta _2 ^\alpha + (c _1\eta _1+c
_2\eta _2)^\alpha;
\end{equation*}
thus the operator (\ref{Marchaud}) reduces to the pseudo-differential
operator
\begin{align*}
\mathcal{D}_{x_1, x_2} = \frac{\partial ^\alpha}{\partial x_1^\alpha} +
\frac{\partial ^\alpha}{\partial x_2^\alpha} + \biggl (c _1 \frac{\partial}{%
\partial x_1}+c _2 \frac{\partial }{\partial x_2} \biggr)^{\alpha },
\end{align*}
where the last summand is the fractional power of the directional derivative.
\end{ex}

\subsection{Some results}

\label{subsection a}

For technical reasons that will be clear in the next section, it is
sometimes convenient to consider the related bi-parameter process
\begin{equation*}
\{ \bigl (H_1 (t_1), H_2(t_2) \bigr), t_1 \geq 0, t_2 \geq 0\}
\end{equation*}
rather than the process $\{(H_1(t), H_2(t)), t\geq 0\}$. The following
result characterizes its distribution.

\begin{prop}
\label{esponente caratteristico campo aleatorio} The double Laplace
transform of $\bigl (H_1 (t_1), H_2(t_2) \bigr)$ reads
\begin{align}
\mathbb{E} e^{- (\alpha H_1(t_1)+ \beta H_2(t_2))}& = e^{-t _1 S (\alpha,
\beta) } e^{-(t_2-t_1) T _2 (\beta)} \mathbf{1} _{\{t_2\geq t_1\}}  \notag \\
&+ e^{-t _2 S (\alpha, \beta) } e^{-(t_1-t_2) T _1 (\alpha)} \mathbf{1}%
_{\{t_1> t_2\}} \qquad \alpha, \beta \geq 0,
\end{align}
where $S$, $T_1$ and $T_2$ have been defined in (\ref{esponente di Laplace
bivariato}) and (\ref{esponente di Laplace univariato 1}), while $\mathbf{1}%
_{A}$ denotes the indicator function of the set $A$.
\end{prop}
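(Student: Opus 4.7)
The plan is to exploit the fact that $\{(H_1(t), H_2(t)), t \geq 0\}$ is a bivariate Lévy process, so it has independent and stationary increments as a vector. The two indicator terms correspond to the two cases $t_2 \geq t_1$ and $t_1 > t_2$, so I would split the proof according to which of the two time arguments is larger and treat each case by the same type of decomposition.

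Consider first the case $t_2 \geq t_1$. The idea is to write
\begin{equation*}
\alpha H_1(t_1) + \beta H_2(t_2) = \bigl[\alpha H_1(t_1) + \beta H_2(t_1)\bigr] + \beta \bigl[H_2(t_2) - H_2(t_1)\bigr],
\end{equation*}
so that the random variable inside the first bracket depends only on the process on $[0,t_1]$, while the increment $H_2(t_2) - H_2(t_1)$ is, by the Lévy property of the bivariate process, independent of $\mathcal{F}_{t_1}$ and distributed as $H_2(t_2 - t_1)$. Taking expectations and factoring,
\begin{equation*}
\mathbb{E} e^{-\alpha H_1(t_1) - \beta H_2(t_2)} = \mathbb{E} e^{-\alpha H_1(t_1) - \beta H_2(t_1)} \cdot \mathbb{E} e^{-\beta (H_2(t_2)-H_2(t_1))}.
\end{equation*}
The first factor equals $e^{-t_1 S(\alpha, \beta)}$ by the bivariate Laplace transform (\ref{abc}), and the second equals $e^{-(t_2 - t_1) T_2(\beta)}$ by the univariate Laplace transform (\ref{esponente di Laplace univariato 1}) applied to the marginal $H_2$.

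The case $t_1 > t_2$ is handled symmetrically by writing instead
\begin{equation*}
\alpha H_1(t_1) + \beta H_2(t_2) = \bigl[\alpha H_1(t_2) + \beta H_2(t_2)\bigr] + \alpha \bigl[H_1(t_1) - H_1(t_2)\bigr],
\end{equation*}
which yields $e^{-t_2 S(\alpha, \beta)} e^{-(t_1 - t_2) T_1(\alpha)}$. Combining the two cases via the indicators $\mathbf{1}_{\{t_2 \geq t_1\}}$ and $\mathbf{1}_{\{t_1 > t_2\}}$ gives the claimed formula. There is essentially no obstacle here; the only mildly delicate point is the bookkeeping of strict versus non-strict inequality on $\{t_1 = t_2\}$, but on that set both expressions in the statement collapse to $e^{-t_1 S(\alpha,\beta)}$, so either choice of strict/non-strict indicator is consistent.
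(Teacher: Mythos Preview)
Your proof is correct and follows essentially the same approach as the paper: both split into the cases $t_2\geq t_1$ and $t_1>t_2$, decompose $\alpha H_1(t_1)+\beta H_2(t_2)$ using the increment at the smaller time, invoke independence and stationarity of increments of the bivariate L\'evy process, and then apply (\ref{abc}) and (\ref{esponente di Laplace univariato 1}). Your remark on the harmless ambiguity at $t_1=t_2$ is a nice addition but not present in the paper.
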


\begin{proof}
The desired equality can be checked as follows:
\begin{align*}
&\mathbb{E} e^{- (\alpha H_1(t_1)+ \beta H_2(t_2))} \\
&= \mathbb{E} \bigl [ e^{ - (\alpha H_1 (t_1)+ \beta H_2 (t_1)) }e^{-\beta (
H_2 (t_2)-H_2(t_1))} \bigr ]\mathbf{1} _{\{t_2\geq t_1\}} \\
& +\mathbb{E} \bigl [ e^{ - (\alpha H_1 (t_2)+ \beta H_2 (t_2)) }e^{-\alpha(
H_1 (t_1)-H_1(t_2))} \bigr ] \mathbf{1}_{\{t_1> t_2\}} \\
& = \mathbb{E}[ e^{ -(\alpha H_1 (t_1)+ \beta H_2 (t_1)) }] \, \mathbb{E}
[e^{- \beta ( H_2 (t_2)-H_2(t_1))}] \mathbf{1} _{\{t_2\geq t_1\}} \\
&+ \mathbb{E} [e^{ -(\alpha H_1 (t_2)+ \beta H_2 (t_2)) }] \, \mathbb{E}
[e^{- \alpha ( H_1 (t_1)-H_1(t_2))}] \mathbf{1}_{\{t_1> t_2\}} \\
& = e^{-t _1 S (\alpha, \beta) } e^{-(t_2-t_1) T _2 (\beta)} \mathbf{1}
_{\{t_2\geq t_1\}}+ e^{-t _2 S (\alpha, \beta) } e^{-(t_1-t_2) T _1(\alpha)}
\mathbf{1}_{\{t_1> t_2\}},
\end{align*}
where we repeatedly used independence of the increments.
\end{proof}

From now on we assume that the random vector $(H_1(t), H_2(t))$ has joint
and marginal densities, respectively defined by
\begin{align*}
&P(H_1(t)\in dx_1, H_2(t)\in dx_2):= q_*(x_1, x_2, t)dx_1dx_2, \qquad t>0 \\
&P(H_i(t)\in dx_i):= q_i(x_i, t)dx_i \qquad i=1,2 \qquad t>0.
\end{align*}
Moreover
\begin{align}
P(H_1(0)\in dx_1, H_2(0)\in dx_2):= \delta _0(dx_1) \delta _0(dx_2).
\label{in1}
\end{align}
Hence we introduce the joint density of $(H_1(t_1), H_2(t_2))$ by
\begin{align*}
P\bigl (H_1(t_1)\in dx_1, H_2(t_2)\in dx_2\bigr):= q(x_1, x_2,t_1,
t_2)dx_1dx_2 \qquad \,\, \text{for all} \,\, t_1, t_2> 0,
\end{align*}
where
\begin{align*}
q(x_1, x_2, t_1, t_2)= & \int _0^\infty q_*(x_1,z,t_1)q_2(x_2-z, t_2-t_1)dz
\, \mathbf{1} _{\{t_1\leq t_2\}} \\
+ & \int _0^\infty q_*(z,x_2,t_2)q_1(x_1-z, t_1-t_2)dz \, \mathbf{1}
_{\{t_1> t_2\}},
\end{align*}
which satisfies
\begin{align*}
q(x_1, x_2, t,t)= q_*(x_1, x_2, t).
\end{align*}
Moreover we have that
\begin{align}
\lim _{t_1 \to 0} q(x_1, x_2, t_1,t_2)= \delta _0 (x_1) q_2(x_2, t_2)
\label{in2} \\
\lim _{t_2 \to 0} q(x_1, x_2, t_1,t_2)= \delta _0(x_2)q_1 (x_1, t_1)
\label{in3}
\end{align}
in the sense of distributions.

The following proposition provides a governing equation for the bi-parameter
process $(H_1(t_1), H_2(t_2))$, which involves the operator $\mathcal{D}
_{x_1, x_2 }$ defined in (\ref{Marchaud}). This extends (\ref{equazione
compatta processo diretto}) to the case where $t_1\neq t_2$.

\begin{prop}
\label{equazione diretto} If $t_1\neq t_2$, then joint density $q(x_1, x_2,
t_1, t_2)$ is the fundamental solution of the equation
\begin{align}
\biggl (\frac{\partial}{\partial t_1 } +\frac{\partial}{\partial t_2} %
\biggr) q(x_1, x_2, t_1,t_2) = -\mathcal{D} _{x_1, x_2 } q(x_1, x_2,
t_1,t_2) \qquad \, t_1\neq t_2 \qquad t_1, t_2>0,  \label{equazione q}
\end{align}
under the initial conditions (\ref{in2}) and (\ref{in3}). In the case $%
t_1=t_2=t$, we have
\begin{align}
\frac{\partial}{\partial t} q_*(x_1, x_2, t)= -\mathcal{D} _{x_1, x_2
}q_*(x_1, x_2, t) \qquad t>0,  \label{equazione Q}
\end{align}
under the initial condition (\ref{in1}).
\end{prop}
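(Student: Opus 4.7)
The plan is to reduce the statement to an algebraic identity on Laplace transforms, using Proposition \ref{esponente caratteristico campo aleatorio} together with the Laplace symbol characterization (\ref{simbolo di laplace}) of $\mathcal{D}_{x_1,x_2}$. Set $\hat{q}(\eta_1,\eta_2,t_1,t_2):=\mathbb{E} e^{-\eta_1 H_1(t_1)-\eta_2 H_2(t_2)}$. By Proposition \ref{esponente caratteristico campo aleatorio},
\begin{align*}
\hat{q}=e^{-t_1(S-T_2)-t_2 T_2}\mathbf{1}_{\{t_2\geq t_1\}}+e^{-t_2(S-T_1)-t_1 T_1}\mathbf{1}_{\{t_1> t_2\}},
\end{align*}
where the arguments of $S,T_1,T_2$ are suppressed.

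On the open region $\{t_2>t_1\}$, differentiating term by term gives $\partial_{t_1}\hat{q}=(T_2-S)\hat{q}$ and $\partial_{t_2}\hat{q}=-T_2\hat{q}$, so that $(\partial_{t_1}+\partial_{t_2})\hat{q}=-S(\eta_1,\eta_2)\hat{q}$. A symmetric computation on $\{t_1>t_2\}$ yields the same identity. Inverting in $(x_1,x_2)$ and invoking (\ref{simbolo di laplace}), which identifies $S(\eta_1,\eta_2)\hat{q}$ as the Laplace transform of $\mathcal{D}_{x_1,x_2}q$, we obtain equation (\ref{equazione q}) pointwise on $\{t_1\neq t_2\}$. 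The initial conditions (\ref{in2}) and (\ref{in3}) follow from the convolution representation of $q(x_1,x_2,t_1,t_2)$ given just before the statement, together with the fact that $q_*(\cdot,\cdot,s)\to\delta_0\otimes\delta_0$ as $s\to 0$, cf.\ (\ref{in1}).

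For the diagonal case $t_1=t_2=t$, the quickest route is to set $t_1=t_2=t$ in the Laplace identity above, obtaining $\hat{q}_*(\eta_1,\eta_2,t)=e^{-tS(\eta_1,\eta_2)}$, whence $\partial_t\hat{q}_*=-S\hat{q}_*$; Laplace inversion via (\ref{simbolo di laplace}) gives (\ref{equazione Q}) under the initial condition (\ref{in1}). Alternatively one may directly quote (\ref{equazione compatta processo diretto}).

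I expect the only delicate point to be the justification of the Laplace inversion and the exchange of $\partial_{t_1},\partial_{t_2}$ with the inverse transform; this is routine since $S$ is a bivariate Bernstein function, so $\hat{q}$ and its partial derivatives decay fast enough in $(\eta_1,\eta_2)$ on any compact subset of $\{t_1\neq t_2\}$ to apply standard dominated convergence arguments. The rest of the proof is essentially bookkeeping on the two regions $t_1<t_2$ and $t_1>t_2$ separately.
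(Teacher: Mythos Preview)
Your proposal is correct and follows essentially the same approach as the paper: both apply $(\partial_{t_1}+\partial_{t_2})$ to the Laplace transform from Proposition \ref{esponente caratteristico campo aleatorio}, obtain $-S(\eta_1,\eta_2)\hat q$ on each of the two regions, and then invert via (\ref{simbolo di laplace}); the diagonal case is handled identically by differentiating (\ref{abc}). Your write-up is slightly more explicit about the separate partial derivatives and about the initial conditions, but there is no substantive difference in strategy.
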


\begin{proof}
The Laplace transform given by Proposition \ref{esponente caratteristico
campo aleatorio} is differentiable for $t_1\neq t_2$, and, by applying the
operator $\bigl (\frac{\partial}{\partial t_1}+ \frac{\partial }{\partial t_2%
} \bigr)$ to both sides, we have
\begin{align}
\biggl (\frac{\partial }{\partial t_1}+ \frac{\partial}{\partial t_2} %
\biggr) \mathbb{E} e^{-(\alpha H_1(t_1)+ \beta H_2(t_2))}= -S(\alpha ,
\beta) \mathbb{E} e^{- (\alpha H_1(t_1)+ \beta H_2(t_2))}, \qquad t_1\neq
t_2.
\end{align}
Then, taking into account (\ref{simbolo di laplace}), by the Laplace
inversion of both sides we get (\ref{equazione q}). Finally, for $t_1=t_2=t$%
, by (\ref{abc}) we have
\begin{align*}
\frac{\partial}{\partial t} \mathbb{E} e^{-\eta _1 H_1(t)-\eta _2H_2(t)} =
-S(\eta _1, \eta _2)e^{-tS(\eta _1, \eta _2)}
\end{align*}
and the inverse Laplace transform of both sides gives (\ref{equazione Q}).
\end{proof}

We now present a further result that will be useful later. For $t_1,t_2>0$,
we define the tail of the L\'evy measure $\phi$ as
\begin{align}
\bar{\phi}(t_1, t_2)=\phi ((t_1, \infty)\times (t_2, \infty))= \int
_{t_1}^\infty \int _{t_2}^\infty \phi (dx_1,dx_2).
\label{coda misura di levy 2d}
\end{align}
Note that, in the case of independence, formula (\ref{misura di levy nel
caso di subordinatori indipendenti}) yields $\bar{\phi}(t_1, t_2)=0$ for
each $t_1$ and $t_2$, because $\phi$ is supported on the coordinate axes.

\begin{prop}
The double Laplace transform of (\ref{coda misura di levy 2d}) reads
\begin{align}
\int _0^\infty \int _0^\infty e^{ -\eta _1t_1-\eta _2t_2} \bar{\phi} (t_1,
t_2) dt_1dt_2= \frac{T_1(\eta _1)+ T_2(\eta _2)-S(\eta _1, \eta _2)}{\eta _1
\eta _2},  \label{trasformata coda misura di levy}
\end{align}
where $T_1$, $T_2$ and $S$ have been defined in (\ref{esponente di Laplace
univariato 2}) and (\ref{esponente di Laplace bivariato}).
\end{prop}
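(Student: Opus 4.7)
The plan is to expand the double Laplace transform on the left-hand side, interchange the order of integration between the Lebesgue measure $dt_1\,dt_2$ and the L\'evy measure $\phi(dx_1,dx_2)$ via Tonelli, and then reduce everything to the bivariate and marginal Laplace exponents using a simple algebraic identity.

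Concretely, I would first rewrite the tail $\bar\phi(t_1,t_2)$ as $\int_0^\infty\!\int_0^\infty \mathbf{1}_{\{x_1>t_1\}}\mathbf{1}_{\{x_2>t_2\}}\,\phi(dx_1,dx_2)$ and plug it into the left-hand side of \eqref{trasformata coda misura di levy}. Since the integrand is nonnegative, Tonelli's theorem lets me swap the iterated Lebesgue integral with the integration against $\phi$, giving
\begin{equation*}
\int_0^\infty\!\int_0^\infty e^{-\eta_1 t_1-\eta_2 t_2}\bar\phi(t_1,t_2)\,dt_1dt_2
= \int_0^\infty\!\int_0^\infty \Bigl(\int_0^{x_1}\! e^{-\eta_1 t_1}dt_1\Bigr)\Bigl(\int_0^{x_2}\! e^{-\eta_2 t_2}dt_2\Bigr)\phi(dx_1,dx_2).
\end{equation*}
Evaluating the two inner integrals produces the factor $(1-e^{-\eta_1 x_1})(1-e^{-\eta_2 x_2})/(\eta_1\eta_2)$.

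Next I would use the elementary identity
\begin{equation*}
(1-e^{-\eta_1 x_1})(1-e^{-\eta_2 x_2}) = (1-e^{-\eta_1 x_1}) + (1-e^{-\eta_2 x_2}) - (1-e^{-\eta_1 x_1-\eta_2 x_2}),
\end{equation*}
which is just a rearrangement of $1-e^{-a}-e^{-b}+e^{-a-b}$. Integrating term by term against $\phi$, the third piece immediately gives $S(\eta_1,\eta_2)$ by \eqref{esponente di Laplace bivariato}. For the first two pieces, I would invoke the fact that setting one Laplace variable to zero in \eqref{esponente di Laplace bivariato} recovers the marginal Laplace exponent: $S(\eta_1,0)=T_1(\eta_1)$ and $S(0,\eta_2)=T_2(\eta_2)$, which is consistent both with \eqref{esponente di Laplace univariato 2} and with the uniqueness of Laplace transforms applied to $H_1$ and $H_2$ individually. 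Dividing through by $\eta_1\eta_2$ yields the claimed identity.

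The argument is essentially mechanical, so there is no real obstacle; the only point that deserves care is the justification of the Fubini/Tonelli step, which is handled by nonnegativity of the integrand, and the identification of the marginal integrals with $T_1$ and $T_2$, which relies on the bivariate L\'evy exponent restricted to the axes coinciding with the univariate Laplace exponents of the marginal subordinators. No finiteness issue arises on either side because each of the three terms $T_1(\eta_1)$, $T_2(\eta_2)$, $S(\eta_1,\eta_2)$ is finite for $\eta_1,\eta_2\ge 0$ by the standing integrability assumption on $\phi$.
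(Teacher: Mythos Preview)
Your proof is correct and is essentially the same computation as the paper's, just run in the opposite direction: the paper starts from the integral representation of $S(\eta_1,\eta_2)$ and integrates by parts in each variable (using $\nu_i(dx_i)=\int_0^\infty\phi(dx_1,dx_2)$ to identify the boundary contributions), whereas you start from the Laplace transform of $\bar\phi$, apply Tonelli, and use the algebraic identity $(1-e^{-a})(1-e^{-b})=(1-e^{-a})+(1-e^{-b})-(1-e^{-a-b})$ to land on $T_1+T_2-S$. The two routes are equivalent, and your presentation is if anything a bit more transparent.
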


\begin{proof}
It is sufficient to start from expression (\ref{esponente di Laplace
bivariato}) and apply consecutively the integration by parts in both
variables. The calculations exploit that
\begin{align*}
\nu _1(dx_1) =\int _0 ^\infty \phi (dx_1,dx_2) \qquad \text{and} \qquad \nu
_2(dx_2) =\int _0 ^\infty \phi (dx_1,dx_2).
\end{align*}
The result holds by simple algebraic manipulations.
\end{proof}

\section{Inverses of bivariate subordinators}

Let $\{(H_1(x), H_2(x)), x\geq 0\}$ be a bivariate subordinator. We here
define its inverse $\{(L_1(t), L_2(t)), t\geq 0\}$ as the process with
marginal components
\begin{align*}
L_i (t)= \inf \{ x: H_i(x)>t\}, \qquad i=1,2.
\end{align*}
We also consider the related bi-parameter process $\{(L_1(t_1), L_2(t_2)),
t_1\geq 0, t_2\geq 0\}$, such that
\begin{align*}
L_i (t_i)=\inf \{ x: H_i(x)>t_i\}, \qquad i=1,2;
\end{align*}
then
\begin{align}
P(L_1(t_1)>x_1, L_2(t_2)>x_2)=P(H_1(x_1)\leq t_1, H_2(x_2)\leq t_2).
\label{diretto e inverso}
\end{align}

It is well-known from the theory of univariate subordinators and their
inverses (see, for example, \cite{kolokoltsov}) that the marginal densities $%
l_1(x_1, t_1)$ and $l_2(x_2,t_2)$ solve the generalized fractional Cauchy
problems
\begin{align}  \label{equazioni 1d}
\begin{cases}
\frac{\partial}{\partial x_i} l_i(x_i, t_i)= -\mathcal{D}_{t_i} l_i(x_i, t_i)
\\
l_i(0, t_i)= -\mathcal{D}_{t_i} \theta (t_i) = \overline{\nu}(t_i) \qquad
i=1,2,%
\end{cases}%
\end{align}
where $\theta (\cdot)$ denotes the Heaviside function, $\mathcal{D}_t$ is
the operator defined in (\ref{operatore univariato}), while
\begin{equation*}
\overline{\nu}_k(t_k)= \nu _k [t_k,\infty) \qquad k=1,2
\end{equation*}
is the tail of the L\'evy measure. Moreover, it is well-known that
\begin{align*}
\int _0^{\infty} e^{-\eta _i t_i} l_i(x_i, t_i)dt_i = \frac{T_i(\eta _i)}{%
\eta _i} e^{-T_i(\eta _i)x_i} \qquad i=1,2
\end{align*}
and the space-time Laplace transforms read
\begin{align}
\int _0^{\infty} \int _0^{\infty} e^{-\eta _i t_i} e^{-\xi _i x_i} l(x_i,
t_i)dx_i dt_i = \frac{T_i (\eta _i)}{\eta _i (\xi _i+T_i(\eta _i)) } \qquad
i=1,2.  \label{laplace inverso 1}
\end{align}
The aim of the next subsections is to find the bi-dimensional counterparts
of such results.

\subsection{Distributional properties}

We here assume that $(H_1(x_1), H_2(x_2))$ is an absolutely continuous
random vector with density $q(t_1, t_2, x_1, x_2)$.

In Proposition \ref{trasformate solo rispetto al tempo}, we show that the
distribution of $\bigl (L_1(t_1), L_2(t_2) \bigr)$ has two components. The
first one is absolutely continuous with respect to the bi-dimensional
Lebesgue measure, namely
\begin{align}
P(L_1(t_1)\in dx_1, L_2(t_2)\in dx_2)=l (x_1,x_2,t_1,t_2)dx_1dx_2 \qquad
x_1\neq x_2;  \label{LLL}
\end{align}
the second one has support on the bisector line $x_1=x_2$ with
one-dimensional Lebesgue density, i.e.
\begin{align}
P(L_1(t_1)\in dx, L_2(t_2)\in dx)=l_*(x,t_1,t_2)dx.  \label{LLLL}
\end{align}

The existence of the second component is due to the intuitive fact that the
event $\{L_1(t_1)=L_2(t_2)\}$ has positive probability; in fact, since $H_1$
and $H_2$ have simultaneous jumps, the first time in which $H_1$ goes beyond
the level $t_1$ may coincide with the first time in which $H_2$ goes beyond
the level $t_2$.

In view of what follows, we need to introduce the Laplace transforms of $l$
and $l_*$ with respect to the time-variables, i.e.
\begin{align*}
\tilde{l}(x_1, x_2, \eta _1, \eta _2)&:= \int _0^\infty \int _0^\infty
e^{-\eta _1t_1-\eta _2t_2} l(x_1, x_2, t_1, t_2)dt_1dt_2, \\
\tilde{l}_*(x,\eta_1, \eta_2)&:= \int _0^\infty \int _0^\infty e^{-\eta
_1t_1-\eta _2t_2}l_*(x, t_1, t_2)dt_1dt_2.
\end{align*}

\begin{prop}
\label{trasformate solo rispetto al tempo} Assume that, for each $x_1,x_2>0$%
, $(H_1(x_1), H_2(x_2))$ has a density denoted by $q(t_1, t_2, x_1, x_2)$.
Then, for any $t_1, t_2>0$, the distribution of $(L_1(t_1), L_2(t_2))$ has
an absolutely continuous component with density
\begin{align}
l(x_1, x_2, t_1, t_2)= \frac{\partial ^2}{\partial x_1 \partial x_2} \int
_0^{t_1} \int _0^{t_2} q(\tau_1, \tau _2, x_1, x_2) d\tau _1 d\tau _2,
\qquad x_1\neq x_2,  \label{densita inverso}
\end{align}
and its Laplace transform is
\begin{align}
&\tilde{l}(x_1, x_2, \eta _1, \eta _2)  \label{uvz} \\
&= \frac{1}{\eta _1 \eta _2}T_2(\eta _2) (S(\eta _1, \eta _2)-T_2(\eta
_2))e^{-x_1(S(\eta _1, \eta _2)-T_2(\eta _2))}e^{-x_2T_2(\eta _2)} \mathbf{1}%
_{\{x_1<x_2\}}  \notag \\
&+\frac{1}{\eta _1 \eta _2} T_1(\eta _1)(S(\eta _1, \eta _2)-T_1(\eta
_1))e^{-x_2(S(\eta _1, \eta _2)-T_1(\eta _1))}e^{-x_1T_1(\eta _1)} \mathbf{1}%
_{\{x_1>x_2\}},  \notag
\end{align}
where $\eta _1, \eta _2\geq 0$ and $S$, $T_1$, $T_2$ have been defined in (%
\ref{esponente di Laplace bivariato}) and (\ref{esponente di Laplace
univariato 2}). Moreover, the distribution of $(L_1(t_1), L_2(t_2))$ also
has a singular component supported on the line $x_1=x_2$, having density
\begin{align}
l_*(x,t_1, t_2)= \frac{\partial^2}{\partial x^2} \int _0^{t_1}\int _0^{t_2}
q_*(\tau_1, \tau _2, x)d\tau _1d\tau _2,  \label{densita 1d inverso}
\end{align}
whose Laplace transform reads
\begin{align}
\tilde{l}_*(x,\eta_1, \eta_2) = \frac{T_1(\eta _1) +T_2 (\eta _2)-S(\eta _1,
\eta _2))}{\eta _1\eta _2} e^{-xS(\eta _1, \eta _2)}.
\label{seconda trasformata}
\end{align}
\end{prop}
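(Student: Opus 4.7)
The proof rests on the fundamental duality relation (\ref{diretto e inverso}): for any $x_1, x_2, t_1, t_2 > 0$,
\[
P(L_1(t_1) > x_1, L_2(t_2) > x_2) = P(H_1(x_1) \leq t_1, H_2(x_2) \leq t_2) = \int_0^{t_1}\int_0^{t_2} q(\tau_1, \tau_2, x_1, x_2)\, d\tau_1 d\tau_2.
\]
On the open region $\{x_1 \neq x_2\}$ the law of $(L_1(t_1), L_2(t_2))$ is absolutely continuous, and I would take classical mixed partial derivatives of the right-hand side in $(x_1, x_2)$ to obtain formula (\ref{densita inverso}).

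To obtain (\ref{uvz}), I would Laplace-transform the duality identity in $(t_1, t_2)$. Fubini and integration by parts give
\[
\int_0^\infty \int_0^\infty e^{-\eta_1 t_1 - \eta_2 t_2} P(H_1(x_1)\leq t_1, H_2(x_2)\leq t_2)\, dt_1 dt_2 = \frac{1}{\eta_1\eta_2}\, \mathbb{E}\bigl[e^{-\eta_1 H_1(x_1)-\eta_2 H_2(x_2)}\bigr],
\]
and Proposition \ref{esponente caratteristico campo aleatorio} makes the right-hand side explicit as a piecewise exponential function of $(x_1, x_2)$. Differentiating $\partial^2/\partial x_1 \partial x_2$ on each of the open regions $\{x_1 < x_2\}$ and $\{x_1 > x_2\}$ reproduces the two branches of (\ref{uvz}).

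The singular diagonal component emerges when the same mixed partial is taken in the distributional sense. Writing the Laplace transform above as $A(x_1, x_2)\mathbf{1}_{x_1 \leq x_2} + B(x_1, x_2)\mathbf{1}_{x_1 > x_2}$, one verifies $A(x,x) = B(x,x) = e^{-xS(\eta_1,\eta_2)}/(\eta_1\eta_2)$, so the first partial $\partial_{x_1}$ carries no Dirac term; however $\partial_{x_1}A(x,x) \neq \partial_{x_1}B(x,x)$, and applying $\partial_{x_2}$ distributionally yields a Dirac $\delta(x_1-x_2)$ whose coefficient is the jump
\[
\bigl[\partial_{x_1} A - \partial_{x_1} B\bigr]_{x_1=x_2=x} = \frac{T_1(\eta_1) + T_2(\eta_2) - S(\eta_1,\eta_2)}{\eta_1\eta_2}\, e^{-xS(\eta_1,\eta_2)},
\]
which is precisely (\ref{seconda trasformata}). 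The density formula (\ref{densita 1d inverso}) then follows by inverting (\ref{seconda trasformata}), using (\ref{trasformata coda misura di levy}) to recognize $(T_1+T_2-S)/(\eta_1\eta_2)$ as a double Laplace transform and $e^{-xS}$ as the spatial Laplace transform of $q_*(\cdot,\cdot,x)$.

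The main technical obstacle is this last distributional computation. The cleanest rigorous route is to pair both sides against a smooth compactly-supported test function $\varphi(x_1, x_2)$, integrate by parts twice in $(x_1, x_2)$, and identify the resulting boundary integral along $x_1 = x_2$ with the singular component of the joint law of $(L_1(t_1), L_2(t_2))$. The a priori existence of this singular part is ensured by the probabilistic fact that $(H_1, H_2)$ can have simultaneous jumps, so that $P(L_1(t_1) = L_2(t_2)) > 0$.
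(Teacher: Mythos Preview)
Your treatment of the absolutely continuous part (formulae (\ref{densita inverso}) and (\ref{uvz})) is essentially identical to the paper's: both start from (\ref{diretto e inverso}), take the $(t_1,t_2)$-Laplace transform, insert Proposition \ref{esponente caratteristico campo aleatorio}, and differentiate in $(x_1,x_2)$ on each open region $\{x_1<x_2\}$, $\{x_1>x_2\}$.

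For the singular component the two routes differ. The paper first obtains (\ref{densita 1d inverso}) directly, by setting $x_1=x_2=x$ in (\ref{diretto e inverso}) and differentiating in $x$; it then passes to (\ref{seconda trasformata}) by invoking the convolution identity
\[
l_*(x,t_1,t_2)=\int_0^{t_1}\!\int_0^{t_2}\bar\phi(t_1-s_1,t_2-s_2)\,q_*(s_1,s_2,x)\,ds_1\,ds_2
\]
and applying (\ref{trasformata coda misura di levy}). You instead extract (\ref{seconda trasformata}) first, as the diagonal Dirac term produced by the distributional mixed partial $\partial_{x_1}\partial_{x_2}$ of the piecewise exponential; this is a clean alternative and makes the origin of the singular mass transparent. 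However, your Laplace inversion of (\ref{seconda trasformata}) yields precisely the convolution formula displayed above (note a slip: $e^{-xS}$ is the $(t_1,t_2)$-Laplace transform of $q_*(\cdot,\cdot,x)$, not a ``spatial'' one), which is the paper's intermediate representation rather than the second-derivative expression (\ref{densita 1d inverso}). To land on (\ref{densita 1d inverso}) as stated you still need the paper's direct step through (\ref{diretto e inverso}) at $x_1=x_2=x$; your inversion does not by itself recover that form.
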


\begin{proof}
Since $(H_1(x_1), H_2(x_2))$ has a density, it is possible to differentiate
both sides of (\ref{diretto e inverso}) and we get (\ref{densita inverso}).
For $x_1<x_2$, by using Proposition \ref{esponente caratteristico campo
aleatorio} and equation (\ref{densita inverso}), we have
\begin{align*}
& \int _0^\infty \int _0^\infty e^{-\eta _1t_1-\eta _2t_2} l(x_1, x_2, t_1,
t_2) dt_1dt_2 \\
& = \int _0^\infty \int _0^\infty e^{-\eta _1t_1-\eta _2t_2} \frac{\partial
^2}{\partial x_1 \partial x_2} \int _0^{t_1} \int _0^{t_2} q(\tau_1, \tau
_2, x_1, x_2) d\tau _1 d\tau _2 \, dt_1dt_2 \\
& = \frac{\partial ^2}{\partial x_1 \partial x_2} \int _0^\infty \int
_0^\infty q(\tau _1, \tau _2, x_1, x_2) \int _{\tau _1}^\infty \int _{\tau
_2}^\infty e^{-\eta _1t_1-\eta _2t_2}dt_1dt_2 \, d\tau _1d\tau _2 \\
& = \frac{1}{\eta _1\eta _2} \frac{\partial ^2}{\partial x_1 \partial x_2}
\int _0^\infty \int _0^\infty e^{-\eta _1\tau _1-\eta _2\tau _2} q(\tau _1,
\tau _2, x_1, x_2) d\tau _1d\tau _2 \\
& = \frac{1}{\eta _1\eta _2} \frac{\partial ^2}{\partial x_1 \partial x_2}
e^{-x_1S(\eta _1, \eta _2)- (x_2-x_1)T_2(\eta _2)} \\
&=\frac{1}{\eta _1 \eta _2} T_2(\eta _2) (S(\eta _1, \eta _2)-T_2(\eta
_2))e^{-x_1(S(\eta _1, \eta _2)-T_2(\eta _2))}e^{-x_2T_2(\eta _2)}.
\end{align*}
The result for the case $x_1>x_2$ can be proved similarly. We also remark
that
\begin{align*}
\int _0^\infty \int _0^\infty l(x_1, x_2, t_1, t_2)dx_1 dx_2\neq 1,
\end{align*}
as
\begin{align*}
&\int _0^\infty \int _0 ^\infty e^{-\eta _1t_1-\eta _2t_2} \biggl (\int
_0^\infty \int _0^\infty l(x_1, x_2, t_1, t_2)dx_1dx_2 \biggr) dt_1dt_2 \neq
\frac{1}{\eta _1\eta _2}.
\end{align*}
This proves that $P\bigl (L_1(t_1)=L_2(t_2)\bigr )>0$. By putting $x_1=x_2=x$
in (\ref{diretto e inverso}), and taking the derivatives with respect to $x$
of both sides, equation (\ref{densita 1d inverso}) is obtained. To compute (%
\ref{seconda trasformata}) we can use the following representation
\begin{align*}
l_*(x,t_1, t_2)= \int _0^{t_1} \int _0^{t_2} \bar{\phi} (t_1-s_1, t_2-s_2)
q_*(s_1, s_2,x)ds_1ds_2
\end{align*}
and apply the convolution theorem for the Laplace transforms, by considering
expressions (\ref{trasformata coda misura di levy}) and (\ref{abc}).
\end{proof}

\begin{os}
Of course, the following normalizing condition holds:
\begin{align}
\int _0^\infty \int _0^\infty l(x_1, x_2, t_1, t_2)dx_1dx_2+ \int _0^\infty
l_*(x,t_1, t_2)dx=1 .  \label{condizione di normalizzazione}
\end{align}
In fact we can check (\ref{condizione di normalizzazione}) by using the
Laplace transforms in Proposition \ref{trasformate solo rispetto al tempo},
and we get
\begin{align*}
&\int _0^\infty \int _0 ^\infty e^{-\eta _1t_1-\eta _2t_2} \biggl (\int
_0^\infty \int _0^\infty l(x_1, x_2, t_1, t_2)dx_1dx_2+ \int _0^\infty
l_*(x,t_1, t_2)dx \biggr) dt_1dt_2 \\
&=\frac{1}{\eta _1\eta _2}.
\end{align*}
\end{os}

\begin{os}
Proposition \ref{trasformate solo rispetto al tempo} gives the time-Laplace
transform of the distribution, presenting three different expressions
related to the regions $x_1<x_2$, $x_1>x_2$ and $x_1=x_2$. These three
components are jointly taken into account in the following space Laplace
transform:
\begin{align}
& \int _0^{\infty} \int _0^\infty \int_{0}^{\infty }\int_{0}^{\infty
}e^{-\eta _{1}t_{1}-\eta _{2}t_{2}} e^{-\xi _1x_1 -\xi _2 x_2} P(L_1(t_1)\in
dx_1, L_2(t_2)\in dx_2) dt_1dt_2  \notag \\
& =\int _0^{\infty} \int _0^\infty e^{-\xi _1x_1 -\xi _2 x_2} \tilde{l}(x_1,
x_2, \eta_1,\eta_2) dx_1dx_2 + \int _0^{\infty} e^{-(\xi _1 +\xi _2)x }
\tilde{l}_*(x,\eta_1,\eta_2) dx  \notag \\
&=\frac{T_1(\eta _{1})T_2(\eta _{2})}{\eta _{1}\eta _{2}\left[ \xi
_{1}+T_1(\eta _{1})\right] \left[ \xi _{2}+T_2(\eta _{2})\right] }+\frac{\xi
_{1}\xi _{2}\left[ T_1(\eta _{1})+T_2(\eta _{2})-S(\eta _{1},\eta _{1})%
\right] }{\eta _{1}\eta _{2}\left[ \xi _{1}+T_1(\eta _{1})\right] \left[ \xi
_{2}+T_2(\eta _{2})\right] \left[ \xi _{1}+\xi _{2}+S(\eta _{1},\eta _{1})%
\right] }  \label{laplace inverso}
\end{align}
for $\xi _{1},\xi _{2},\eta _{1},\eta _{2}>0$. We remark that, in the case
of independence, by using (\ref{S=T1+T2}) we have that (\ref{laplace inverso}%
) reduces to the product of the univariate transforms given in (\ref{laplace
inverso 1}).
\end{os}

In the following proposition we prove that the survival function of $%
(L_1(t_1), L_2(t_2))$ decays at least exponentially fast. Therefore all the
mixed moments of integer order exist. We then compute the Laplace transform
for the covariance between $L_1(t_1)$ and $L_2(t_2)$.

\begin{prop}
For any $\eta_1, \eta_2\geq 0$ the following inequality holds
\begin{align}
P\bigl (L_1(t_1)\geq x_1, L_2(t_2)\geq x_2\bigr )\leq & \, e^{-x _1 S (\eta
_1, \eta _2) } e^{-(x_2-x_1) T _2 (\eta _2)} e^{\eta _1t_1} e^{\eta _2t_2}
\mathbf{1} _{\{x_2\geq x_1\}}  \notag \\
&+ e^{-x _2 S (\eta _1, \eta _2) } e^{-(x_1-x_2) T _1 (\eta _1)} e^{\eta
_1t_1} e^{\eta _2t_2} \mathbf{1}_{\{x_1> x_2\}}.  \label{survival function}
\end{align}
Therefore, all the mixed moments of integer order exist and, moreover,
\begin{align}
\int _0^\infty \int _0^\infty e^{-\eta _1t_1-\eta _2t_2}cov \bigl ( %
L_1(t_1),L_2(t_2) \bigr )dt_1dt_2= \frac{T_1(\eta _1)+T_2(\eta _2)-S(\eta
_1, \eta _2)}{\eta _1\eta _2T_1(\eta _1)T_2(\eta _2)S(\eta _1, \eta _2)}.
\label{covarianza}
\end{align}
\end{prop}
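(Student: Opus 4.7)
The proof has three pieces: the tail bound (\ref{survival function}), finiteness of mixed moments, and the covariance formula (\ref{covarianza}). For the first, I would use an exponential Markov (Chernoff) argument combined with the duality (\ref{diretto e inverso}). Since $(H_1, H_2)$ is absolutely continuous, strict inequalities can be replaced by weak ones, so that $P(L_1(t_1) \geq x_1, L_2(t_2) \geq x_2) = P(H_1(x_1) \leq t_1, H_2(x_2) \leq t_2)$. On this event, for any $\eta_1, \eta_2 \geq 0$ one has $e^{\eta_1(t_1 - H_1(x_1)) + \eta_2(t_2 - H_2(x_2))} \geq 1$, hence
\begin{equation*}
P(H_1(x_1) \leq t_1, H_2(x_2) \leq t_2) \leq e^{\eta_1 t_1 + \eta_2 t_2} \mathbb{E}\bigl[e^{-\eta_1 H_1(x_1) - \eta_2 H_2(x_2)}\bigr],
\end{equation*}
and substituting the closed form of the last expectation from Proposition \ref{esponente caratteristico campo aleatorio}, with $(t_1, t_2)$ playing the role of $(x_1, x_2)$, yields (\ref{survival function}).

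For the existence of mixed moments, fix any $\eta_1, \eta_2 > 0$. All exponential rates appearing in the bound (\ref{survival function}) are strictly positive: $S > 0$ and $T_j > 0$ by infinite activity, while
\begin{equation*}
S(\eta_1, \eta_2) - T_j(\eta_j) = \int\int_{\mathbb{R}_+^2} e^{-\eta_{3-j} x_{3-j}} (1 - e^{-\eta_j x_j})\, \phi(dx_1, dx_2) > 0,
\end{equation*}
since otherwise $\phi$ would be concentrated on the axis $\{x_j = 0\}$ and the $j$-th marginal subordinator would be trivial, contradicting infinite activity. Exponential decay of the survival function in both $x_1$ and $x_2$ follows, and the identity $\mathbb{E}[L_1(t_1)^m L_2(t_2)^n] = mn \int_0^\infty\int_0^\infty x_1^{m-1} x_2^{n-1} P(L_1(t_1) \geq x_1, L_2(t_2) \geq x_2)\, dx_1 dx_2$ then gives finiteness of all mixed integer-order moments.

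For the covariance transform, let $F(\xi_1, \xi_2, \eta_1, \eta_2)$ denote the right-hand side of (\ref{laplace inverso}), that is, the joint $(\xi, \eta)$-Laplace transform of the law of $(L_1(t_1), L_2(t_2))$. The tail bound just proved allows one to differentiate under the integral sign to obtain
\begin{equation*}
\int_0^\infty \int_0^\infty e^{-\eta_1 t_1 - \eta_2 t_2}\, \mathbb{E}[L_1(t_1) L_2(t_2)]\, dt_1 dt_2 = \frac{\partial^2 F}{\partial \xi_1 \partial \xi_2}\bigg|_{\xi_1 = \xi_2 = 0}.
\end{equation*}
The first summand of $F$ is separable in $\xi_1$ and $\xi_2$, so its mixed derivative at $0$ contributes $\frac{1}{\eta_1 \eta_2 T_1(\eta_1) T_2(\eta_2)}$; the second summand carries a prefactor $\xi_1 \xi_2$, so only the remaining factor evaluated at $\xi_1 = \xi_2 = 0$ survives after $\partial_{\xi_1}\partial_{\xi_2}$, yielding $\frac{T_1(\eta_1) + T_2(\eta_2) - S(\eta_1, \eta_2)}{\eta_1 \eta_2 T_1(\eta_1) T_2(\eta_2) S(\eta_1, \eta_2)}$. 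On the other hand, the Laplace transform of $\mathbb{E}[L_1(t_1)]\mathbb{E}[L_2(t_2)]$ factorises into a product of two univariate transforms, each equal to $\frac{1}{\eta_i T_i(\eta_i)}$ by differentiating (\ref{laplace inverso 1}) once in $\xi_i$ at $0$. Subtracting produces (\ref{covarianza}) after elementary cancellation.

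The only genuinely delicate point is checking that the decay rates in (\ref{survival function}) are strictly positive (and hence justifying the differentiation under the integral sign in the last step); once this is in place, both the moment estimate and the algebraic manipulation of $F$ are routine.
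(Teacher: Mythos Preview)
Your proof is correct. The tail bound argument is essentially identical to the paper's: the paper phrases it as an application of the ``bivariate Markov inequality'' (writing $P(e^{-\eta_1 H_1(x_1)}>e^{-\eta_1 t_1},\, e^{-\eta_2 H_2(x_2)}>e^{-\eta_2 t_2})$ and bounding by the joint moment), which is exactly your Chernoff step in different clothing, and then both invoke Proposition~\ref{esponente caratteristico campo aleatorio}.

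For the covariance, your route differs from the paper's in a useful way. The paper computes the Laplace transform of $\mathbb{E}[L_1(t_1)L_2(t_2)]$ by integrating $x_1x_2$ against $\tilde l(x_1,x_2,\eta_1,\eta_2)$ over the two regions $\{x_1<x_2\}$ and $\{x_1>x_2\}$ separately, and adding $\int_0^\infty x^2\,\tilde l_*(x,\eta_1,\eta_2)\,dx$ for the singular part, using the explicit formulas of Proposition~\ref{trasformate solo rispetto al tempo}. You instead differentiate the combined space--time transform (\ref{laplace inverso}) in $\xi_1,\xi_2$ at zero. Your method is more compact (no region splitting, and the $\xi_1\xi_2$ prefactor on the second summand makes the mixed derivative trivial), while the paper's method is more direct in that it does not require the intermediate formula (\ref{laplace inverso}). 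Your extra check that $S(\eta_1,\eta_2)-T_j(\eta_j)>0$ is a nice addition that the paper leaves implicit; it is what actually guarantees the exponential decay needed for finiteness of moments and for differentiating under the integral.
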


\begin{proof}
The proof of (\ref{survival function}) is based on the so-called bivariate
Markov inequality (see \cite{markov inequality}, Theorem 1). For $\eta _1,
\eta _2\geq 0$,
\begin{align*}
P\bigl (L_1(t_1)\geq x_1, L_2(t_2)\geq x_2\bigr )&= P\bigl (H_1(x_1)<t_1,
H_2(x_2)< t_2\bigr ) \\
& = P\bigl ( e^{-\eta _1H_1(x_1)}>e^{-\eta _1t_1}, e^{-\eta _2 H_2(x_2)}
>e^{-\eta _2t_2} \bigr ) \\
& \leq \frac{\mathbb{E} [e^{-\eta _1H_1(x_1) -\eta _2H_2(x_2) }] }{e^{-\eta
_1t_1} e^{-\eta _2t_2} } \\
& = e^{-x _1 S (\eta _1, \eta _2) } e^{-(x_2-x_1) T _2 (\eta _2)} e^{\eta
_1t_1} e^{\eta _2t_2} \mathbf{1} _{\{x_2\geq x_1\}} \\
&+ e^{-x _2 S (\eta _1, \eta _2) } e^{-(x_1-x_2) T _1 (\eta _1)} e^{\eta
_1t_1} e^{\eta _2t_2} \mathbf{1}_{\{x_1> x_2\}}
\end{align*}
where the last equality holds by Proposition \ref{esponente caratteristico
campo aleatorio}. The Laplace transform of $\mathbb{E}\bigl (L_1(t_1)L_2(t_2)%
\bigr)$ can be computed as
\begin{align*}
& \int _0^\infty \int _0^\infty e^{-\eta _1t_1-\eta _2t_2}\mathbb{E}\bigl ( %
L_1(t_1)L_2(t_2) \bigr )dt_1dt_2 \\
&= \int _0^\infty \int _0^\infty x_1x_2 \tilde{l} (x_1, x_2, \eta _1, \eta
_2)dx_1dx_2 + \int _0^\infty x^2 \, \tilde{l}_*(x, \eta _1, \eta _2)dx
\end{align*}
and then, exploiting Prop. \ref{trasformate solo rispetto al tempo}, after
some calculations we get
\begin{align}
\int _0^\infty \int _0^\infty e^{-\eta _1t_1-\eta _2t_2}\mathbb{E}\bigl ( %
L_1(t_1)L_2(t_2) \bigr )dt_1dt_2= \frac{T_1(\eta _1)+T_2(\eta _2)}{\eta
_1\eta _2T_1(\eta _1)T_2(\eta _2)S(\eta _1, \eta _2)}.  \label{momento misto}
\end{align}
Finally, by taking into account that univariate inverse subordinators are
such that
\begin{align*}
\int _0^\infty e^{-\eta_i t_i} \mathbb{E} L_i(t_i)dt_i= \frac{1}{\eta
_iT_i(\eta _i)} \qquad i=1,2,
\end{align*}
we immediately obtain (\ref{covarianza}).
\end{proof}

\begin{os}
It is known that, if $H$ is a univariate subordinator and $L$ is its
inverse, then $\mathbb{E}L(t)$ can be related to the potential density of $H$%
. We recall that, in the frame of potential theory (see e.g. \cite{bogdan}),
the potential density of a subordinator $H$ is the function $t\to u(t)$ such
that $u(t)dt$ represents the mean sojourn time spent by $H$ in the interval $%
[t, t+dt)$, namely
\begin{align*}
u(t)dt= \int _0^\infty P(H(x)\in dt)dx;
\end{align*}
indeed the following relation holds
\begin{align*}
u(t)=\frac{d}{dt} \mathbb{E}L(t).
\end{align*}
It is interesting to remark that a similar result holds for a bivariate
subordinator $(H_1, H_2)$ with inverse $(L_1, L_2)$. Indeed, let $u(t_1,
t_2)dt_1dt_2$ be the mean sojourn time spent by $(H_1, H_2)$ in the set $%
[t_1, t_1+dt_1)\times [t_2, t_2+dt_2)$, i.e.
\begin{align*}
u(t_1, t_2)dt_1dt_2= \int \int _{\mathbb{R}_+^2} P(H_1(x_1)\in dt_1,
H_2(x_2)\in dt_2) dx_1dx_2;
\end{align*}
then the following relation holds
\begin{align*}
\frac{\partial ^2}{\partial t_1\partial t_2} \mathbb{E}\bigl ( %
L_1(t_1)L_2(t_2) \bigr) &= \frac{\partial ^2}{\partial t_1\partial t_2} \int
\int _{\mathbb{R}_+^2} P(L_1(t_1)>x_1, L_2(t_2)>x_2)dx_1dx_2 \\
&= \frac{\partial ^2}{\partial t_1\partial t_2}\int \int _{\mathbb{R}_+^2}
P(H_1(x_1)\leq t_1, H_2(x_2) \leq t_2)dx_1dx_2 \\
&=u(t_1, t_2).
\end{align*}
\end{os}

\subsection{Governing equation}

The following theorem provides boundary value problems for both the
densities $l(x_1, x_2, t_1, t_2)$ and $l_*(x,t_1, t_2)$ defined in (\ref{LLL}%
) and (\ref{LLLL}). In this way we generalize the boundary value problems in
(\ref{equazioni 1d}).
The arising equations are PDEs exhibiting ordinary derivatives in the space
variables and the integro-differential operator $\mathcal{D}_{t_1, t_2}$ in
the time variables. We note that, for well-posedness reasons, we set two
distinct boundary value problems for $l$, which are respectively defined on
the open sets $x_2>x_1>0$ and $x_1>x_2>0$. We will denote again the
Heaviside function by $\theta (\cdot)$.

\begin{te}
\label{teorema equazione governante} The density $l(x_1, x_2, t_1, t_2)$
solves the equation
\begin{align}
&\biggl (\frac{\partial}{\partial x_1 } +\frac{\partial}{\partial x_2} %
\biggr) l(x_1, x_2, t_1,t_2) =- \mathcal{D} _{t_1, t_2 } l(x_1, x_2, t_1,t_2)
\label{equazione pura}
\end{align}
on the open set $x_2>x_1>0$, under the boundary condition
\begin{align}
&l(0,x_2,t_1,t_2) = (\mathcal{D} _{t_1, t_2 } - \mathcal{D}^{(2)}_{t_2}) \,
\mathcal{D} ^{(2)}_{t_2} \theta (t_1) P(L_2(t_2)\geq x_2), \qquad x_2>0,
\label{seconda condizione iniziale}
\end{align}
where $\mathcal{D} ^{(i)}_{x}$ has been defined in $(\ref{op univ})$.
Moreover, on the open set $x_1>x_2>0$, $l(x_1, x_2, t_1, t_2)$ solves the
same equation (\ref{equazione pura}) under the boundary condition
\begin{align}
&l(x_1,0,t_1,t_2)= (\mathcal{D} _{t_1, t_2 } - \mathcal{D}^{(1)}_{t_1}) \,
\mathcal{D}^{(1)}_{t_1} \theta (t_2) P(L_1(t_1)\geq x_1) \qquad x_1>0.
\label{prima condizione iniziale}
\end{align}
Furthermore, the density $l_*(x,t_1, t_2)$ solves the equation
\begin{align}
\frac{\partial}{\partial x}l_*(x,t_1, t_2)= -\mathcal{D} _{t_1, t_2
}l_*(x,t_1, t_2) \qquad x>0  \label{equazione densita 1d}
\end{align}
under the boundary condition
\begin{align}
l_*(0,t_1, t_2)= \bar{\phi} (t_1, t_2) ,  \label{condizione iniziale 1d}
\end{align}
where $\bar{\phi}$ has been defined in (\ref{coda misura di levy 2d}).
\end{te}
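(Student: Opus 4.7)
The plan is to pass everything to the double Laplace transform in the time variables, verify the claimed equations and boundary conditions in the transformed domain using the explicit formulas (\ref{uvz}) and (\ref{seconda trasformata}) from Proposition \ref{trasformate solo rispetto al tempo}, and then invert. The essential fact is that the operator $\mathcal{D}_{t_1,t_2}$ has Laplace symbol $S(\eta_1,\eta_2)$ by (\ref{simbolo di laplace}), while the one-dimensional operators $\mathcal{D}^{(1)}_{t_1}$ and $\mathcal{D}^{(2)}_{t_2}$ have symbols $T_1(\eta_1)$ and $T_2(\eta_2)$, respectively.

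For (\ref{equazione pura}) on the region $x_2>x_1>0$, I would read off directly from the first line of (\ref{uvz}) that $\partial_{x_1}\tilde{l} = -(S(\eta_1,\eta_2)-T_2(\eta_2))\tilde{l}$ and $\partial_{x_2}\tilde{l} = -T_2(\eta_2)\tilde{l}$, so that their sum equals $-S(\eta_1,\eta_2)\tilde{l}$. This is precisely the double Laplace transform of (\ref{equazione pura}), and inversion in $(t_1,t_2)$ delivers the PDE. The same computation applied to the second line of (\ref{uvz}) handles $x_1>x_2>0$. For the $l_*$ equation (\ref{equazione densita 1d}), direct differentiation of (\ref{seconda trasformata}) gives $\partial_x\tilde{l}_* = -S(\eta_1,\eta_2)\tilde{l}_*$, whose inversion produces (\ref{equazione densita 1d}).

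To obtain the boundary condition (\ref{seconda condizione iniziale}), I set $x_1=0$ in the first line of (\ref{uvz}) and rewrite the result as
\begin{equation*}
\tilde{l}(0,x_2,\eta_1,\eta_2) = \bigl(S(\eta_1,\eta_2)-T_2(\eta_2)\bigr)\, T_2(\eta_2)\, \frac{1}{\eta_1}\, \frac{e^{-x_2 T_2(\eta_2)}}{\eta_2}.
\end{equation*}
Here $1/\eta_1$ is the Laplace transform of $\theta(t_1)$, while $e^{-x_2T_2(\eta_2)}/\eta_2$ is the Laplace transform in $t_2$ of $P(L_2(t_2)\geq x_2)$, as follows by integrating the one-dimensional identity recalled before (\ref{laplace inverso 1}) in $y$ over $[x_2,\infty)$. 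Since multiplication by $T_2(\eta_2)$ is the Laplace symbol of $\mathcal{D}^{(2)}_{t_2}$, and multiplication by $S(\eta_1,\eta_2)-T_2(\eta_2)$ is the Laplace symbol of $\mathcal{D}_{t_1,t_2}-\mathcal{D}^{(2)}_{t_2}$, and since $\mathcal{D}^{(2)}_{t_2}$ commutes with multiplication by functions depending only on $t_1$, Laplace inversion yields (\ref{seconda condizione iniziale}); the symmetric argument, interchanging the indices, gives (\ref{prima condizione iniziale}). For (\ref{condizione iniziale 1d}), setting $x=0$ in (\ref{seconda trasformata}) produces $(T_1(\eta_1)+T_2(\eta_2)-S(\eta_1,\eta_2))/(\eta_1\eta_2)$, which by (\ref{trasformata coda misura di levy}) is the double Laplace transform of $\bar\phi(t_1,t_2)$, and inversion gives the stated boundary value.

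The only real difficulty is the symbolic bookkeeping behind (\ref{seconda condizione iniziale}) and (\ref{prima condizione iniziale}): the Laplace transform at $x_1=0$ factors very naturally as a product of four pieces $(S-T_2)$, $T_2$, $1/\eta_1$, $e^{-x_2T_2}/\eta_2$, each of which has an obvious preimage (two operators and two functions), but one must check that the intermediate composition is well defined, that is, that $\theta(t_1)P(L_2(t_2)\geq x_2)$, extended by zero on $\{t_i<0\}$, belongs to the domain of $\mathcal{D}^{(2)}_{t_2}$, and that $\mathcal{D}^{(2)}_{t_2}[\theta(t_1)P(L_2(t_2)\geq x_2)]$ in turn belongs to the domain of $\mathcal{D}_{t_1,t_2}-\mathcal{D}^{(2)}_{t_2}$, so that the symbolic Laplace calculus used above is rigorous.
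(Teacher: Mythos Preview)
Your proposal is correct. For the boundary conditions the argument coincides with the paper's (set $x_1=0$, resp.\ $x=0$, in (\ref{uvz}) and (\ref{seconda trasformata}) and invert, using (\ref{trasformata coda misura di levy}) for $l_*$). For the PDE (\ref{equazione pura}) and (\ref{equazione densita 1d}), however, the paper proceeds differently: instead of differentiating the explicit transform $\tilde l$, it works in the $(t_1,t_2)$-domain via the representation $l=\partial_{x_1}\partial_{x_2}\int_0^{t_1}\int_0^{t_2}q\,d\tau_1 d\tau_2$ from (\ref{densita inverso}), applies $\partial_{x_1}+\partial_{x_2}$ under the integral, invokes the governing equation for the direct process from Proposition \ref{equazione diretto} (with the roles of ``space'' and ``time'' variables swapped), and then commutes $\mathcal{D}_{\tau_1,\tau_2}$ with the double integral by Fubini; the equation for $l_*$ is obtained analogously from (\ref{densita 1d inverso}) and (\ref{equazione compatta processo diretto}). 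Your purely Laplace-side computation is more uniform and slightly shorter, while the paper's route makes explicit that the equation for the inverse is inherited structurally from the equation for the underlying subordinator. Your closing caveat about the domain of $\mathcal{D}_{t_1,t_2}-\mathcal{D}^{(2)}_{t_2}$ is a fair point; the paper does not address it either.
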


\begin{proof}
We start with the case $x_2>x_1>0$; the case $x_1>x_2>0$ can be treated
similarly. We apply $\frac{\partial}{\partial x_1}+\frac{\partial}{\partial
x_2}$ to both members of (\ref{densita inverso}); by using Proposition \ref%
{equazione diretto} we have
\begin{align}
\biggl ( \frac{\partial}{\partial x_1}+\frac{\partial}{\partial x_2} \biggr
) l(x_1, x_2, t_1, t_2) = -\frac{\partial ^2}{\partial x_1\partial x_2} \int
_0^{t_1} \int _0 ^{t_2} \mathcal{D}_{\tau _1, \tau _2} q(\tau_1, \tau_2,
x_1, x_2) d\tau _1 d\tau _2.
\end{align}
By using the definition of $\mathcal{D}_{\tau _1, \tau _2}$, Fubini's
theorem gives
\begin{align}
\int _0^{t_1} \int _0 ^{t_2} \mathcal{D}_{\tau _1, \tau _2} q(\tau_1,
\tau_2, x_1, x_2) d\tau _1d\tau _2= \mathcal{D}_{t _1, t _2} \int _0^{t_1}
\int _0 ^{t_2} q(\tau_1, \tau_2, x_1, x_2) d\tau _1d\tau _2
\label{commutes with the integral}
\end{align}
which leads to equation (\ref{equazione pura}). In order to check the
boundary condition (\ref{seconda condizione iniziale}), we put $x_1=0$ in (%
\ref{uvz}), which yields
\begin{align*}
\tilde{l}(0, x_2, \eta _1, \eta _2) =\frac{1}{\eta _1\eta _2}T_2(\eta _2)
(S(\eta _1, \eta _2)-T_2(\eta _2))e^{-x_2T_2(\eta _2)}
\end{align*}
and the inversion of the Laplace transform gives (\ref{seconda condizione
iniziale}).

We finally conclude with the results for the density $l_*(x,t_1, t_2)$. In
order to obtain equation (\ref{equazione densita 1d}) we have to apply $%
\frac{\partial}{\partial x}$ to both sides of (\ref{densita 1d inverso}) and
use (\ref{equazione compatta processo diretto}) together with the fact that $%
\mathcal{D}_{t_1, t_2}$ commutes with the integral. Finally, by putting $x=0$
in (\ref{seconda trasformata}) we get
\begin{align*}
\tilde{l}_*(0,\eta _1, \eta _2) = \frac{T_1(\eta _1)+T_2(\eta _2)-S(\eta _1,
\eta _2)}{\eta _1\eta _2};
\end{align*}
by taking into account (\ref{trasformata coda misura di levy}), the inverse
Laplace transform gives the boundary condition (\ref{condizione iniziale 1d}%
).
\end{proof}

\begin{os}
The boundary condition (\ref{condizione iniziale 1d}) means that,
heuristically speaking, $H_1$ and $H_2$ perform initial jumps respectively
greater than $t_1$ and $t_2$ if and only if the first hitting times of the
levels $t_1$ and $t_2$ are both equal to 0.
\end{os}

\begin{os}
We here discuss the well-posedness of the boundary value problems in Thm. %
\ref{teorema equazione governante}. For the region $x_2>x_1>0$, by applying
the time-Laplace transform to (\ref{equazione pura}), one has
\begin{align*}
\biggl (\frac{\partial}{\partial x_1 } +\frac{\partial}{\partial x_2} %
\biggr) \tilde{l}(x_1, x_2, \eta_1,\eta_2) = -S(\eta _1, \eta _2) \tilde{l}%
(x_1, x_2, \eta_1,\eta_2)
\end{align*}
which is a damped wave equation in the variables $x_1$ and $x_2$, under the
boundary condition
\begin{align*}
\tilde{l}(0, x_2, \eta_1, \eta_2)= \frac{1}{\eta _1\eta _2}T_2(\eta _2)
(S(\eta _1, \eta _2)-T_2(\eta _2)e^{-x_2T_2(\eta _2)}.
\end{align*}
Such a problem can be solved by the method of characteristics which gives
the desired solution (\ref{uvz}). Indeed the characteristic lines are $%
x_2=x_1+k$, with $k>0$ which carry the boundary condition to the region $%
x_2>x_1>0$, giving the solution
\begin{align*}
\tilde{l}(x_1, x_2, \eta_1,\eta_2)= \tilde{l}(0, x_2-x_1, \eta_1,\eta_2)
e^{-S(\eta _1, \eta _2) x_1}
\end{align*}
which coincides with (\ref{uvz}). In the same way, the characteristic lines $%
x_2=x_1+k$, with $k<0$, carry the boundary condition $\tilde{l}(x_1,0, \eta
_1, \eta _2)$ to the region $x_1>x_2>0$ and lead to the solution.

Finally, by passing again to Laplace transform in the time variables, it is
straightforward to check the well-posedness of the boundary value problem
for $l_*(x,t_1, t_2)$, as $\tilde{l}_* (x,\eta _1, \eta _2)$ is governed by
an ordinary differential equation in the variable $x$.
\end{os}

\section{Time-change of bivariate Markov processes}

Let $(X_1 (t), X_2(t))$ be a $\mathbb{R}^2$-valued Markov process such that $%
X_1 (t)$ and $X_2(t)$ are independent. Without loss of generality, we assume
that $(X_1 (0), X_2(0))=(0,0)$ almost surely. Let $p_i(x_i, t), i=1,2,$ be
the marginal density of $X_i(t)$, and let $p_*
(x_1,x_2,t)=p_1(x_1,t)p_2(x_2,t)$ denote the joint density. The following
forward equations hold:
\begin{align}
\frac{\partial}{\partial t}p_i(x_i,t) &=\mathcal{G}_i p_i(x_i,t) \qquad i=1,2
\label{equazione markov 1} \\
\frac{\partial}{\partial t} p _* (x_1, x_2, t)&= (\mathcal{G}_1+\mathcal{G}%
_2) p _* (x_1, x_2, t),  \label{equazione markov 2}
\end{align}
where the operators $\mathcal{G}_1$ and $\mathcal{G}_2$ are the duals to the
generators of $X_1$ and $X_2$. It is straightforward to verify that the
bi-parameter process $(X_1(t_1), X_2(t_2))$ has a density $p(x_1, x_2, t_1,
t_2)= p_1(x_1,t_1)p_2(x_2, t_2)$ satisfying
\begin{align}
\biggl (\frac{\partial}{\partial t_1}+\frac{\partial}{\partial t_2}\biggr ) %
p(x_1, x_2, t_1, t_2)& = (\mathcal{G}_1+\mathcal{G}_2)p(x_1, x_2, t_1, t_2).
\label{equazione Markov 3}
\end{align}

We are now interested in the time-changed process
\begin{align}
\{\bigl (X_1(L_1(t_1)), X_2(L_2(t_2)) \bigr), t_1\geq 0, t_2\geq 0 \},
\label{processo finale}
\end{align}
where $(L_1, L_2)$ is the inverse of a bivariate subordinator, which is
independent on $(X_1, X_2)$. By a simple conditioning argument, the density
of the random vector $\bigl (X_1(L_1(t_1)), X_2(L_2(t_2)) \bigr)$ has the
form
\begin{align}
h(x_1,x_2,t_1, t_2)= & \int _0^\infty \int _0^\infty
p(x_1,x_2,u,v)l(u,v,t_1, t_2)dudv  \notag \\
&+\int _0^\infty p_*(x_1, x_2, u)l_*(u,t_1, t_2)du,
\label{densita subordinato}
\end{align}
where the functions $l$ and $l_*$ have been defined in (\ref{LLL}) and (\ref%
{LLLL}). The following theorem provides a governing equation for (\ref%
{densita subordinato}), which turns out to be a generalization of eq. (\ref%
{equazione Markov 3}) as the operator $\frac{\partial}{\partial t_1}+\frac{%
\partial}{\partial t_2}$ on the left hand side is replaced by $\mathcal{D}%
_{t_1, t_2}$ defined in (\ref{Marchaud}).

\begin{te}
\label{equazione per il processo time-changed } Let $(X_1 (t), X_2(t))$ be a
$\mathbb{R}^2$-valued Markov process, with independent marginal components,
such that $(X_1 (0), X_2(0))=(0,0)$ almost surely. Moreover, let (\ref%
{equazione markov 1}) and (\ref{equazione markov 2}) hold. Then, for any $%
t_1, t_2>0$, the density (\ref{densita subordinato}) of the time-changed
process $\bigl (X_1(L_1(t_1)), X_2(L_2(t_2) \bigr)$ satisfies the following
equation
\begin{align}
\mathcal{D}_{t_1, t_2} h (x_1, x_2, t_1, t_2) = (\mathcal{G}_1+\mathcal{G}%
_2) h (x_1, x_2, t_1, t_2) \qquad x_1\neq 0, x_2 \neq 0.
\end{align}
\end{te}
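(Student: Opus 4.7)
The strategy is to apply $\mathcal{D}_{t_1,t_2}$ to the convolution representation (\ref{densita subordinato}), use Theorem \ref{teorema equazione governante} to convert the time operator acting on $l$ and $l_*$ into spatial derivatives, and then transfer those derivatives onto $p$ and $p_*$ via integration by parts, so that (\ref{equazione markov 2}) and (\ref{equazione Markov 3}) can be invoked to yield $(\mathcal{G}_1+\mathcal{G}_2)h$. The boundary terms produced by the integration by parts must then be shown to vanish whenever $x_1\neq 0$ and $x_2\neq 0$.

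First I would push $\mathcal{D}_{t_1,t_2}$ inside the two integrals in (\ref{densita subordinato}) (a Fubini-type step, legitimate by the integral representation of $\mathcal{D}_{t_1,t_2}$ against the L\'evy measure $\phi$) and apply Theorem \ref{teorema equazione governante} to obtain
\begin{align*}
\mathcal{D}_{t_1,t_2}h(x_1,x_2,t_1,t_2)
&=-\int_0^\infty\!\!\int_0^\infty p(x_1,x_2,u,v)\bigl(\partial_u+\partial_v\bigr)l(u,v,t_1,t_2)\,du\,dv\\
&\quad-\int_0^\infty p_*(x_1,x_2,u)\,\partial_u l_*(u,t_1,t_2)\,du.
\end{align*}
For the double integral I would split the quadrant $\{u,v>0\}$ into $\{v>u>0\}$ and $\{u>v>0\}$, on each of which equation (\ref{equazione pura}) is valid, and apply the divergence theorem to the vector field $(1,1)\,pl$. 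The key observation is that the direction $(1,1)$ is tangent to the diagonal $\{u=v\}$, so the flux across that boundary vanishes and the only surviving boundary contributions come from $\{u=0\}$ and $\{v=0\}$. A one-dimensional integration by parts in $u$ handles the second integral similarly, producing a boundary term at $u=0$.

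After this step, (\ref{equazione markov 2}) and (\ref{equazione Markov 3}) let me replace $(\partial_u+\partial_v)p$ with $(\mathcal{G}_1+\mathcal{G}_2)p$ and $\partial_u p_*$ with $(\mathcal{G}_1+\mathcal{G}_2)p_*$, and pulling $\mathcal{G}_1+\mathcal{G}_2$ out of the volume terms recovers $(\mathcal{G}_1+\mathcal{G}_2)h$ on the right-hand side. The residual boundary contribution reads
\[
\int_0^\infty p(x_1,x_2,0,v)\,l(0,v,t_1,t_2)\,dv+\int_0^\infty p(x_1,x_2,u,0)\,l(u,0,t_1,t_2)\,du+p_*(x_1,x_2,0)\,l_*(0,t_1,t_2),
\]
and this is exactly the piece that must vanish. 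Because $(X_1(0),X_2(0))=(0,0)$ a.s.\ and the two components are independent, the initial marginals satisfy $p_1(\cdot,0)=\delta_0$ and $p_2(\cdot,0)=\delta_0$, so $p(x_1,x_2,0,v)=\delta_0(x_1)p_2(x_2,v)$, $p(x_1,x_2,u,0)=p_1(x_1,u)\delta_0(x_2)$ and $p_*(x_1,x_2,0)=\delta_0(x_1)\delta_0(x_2)$; each of the three terms is supported at $\{x_1=0\}\cup\{x_2=0\}$ and hence vanishes on the region $x_1\neq 0,\,x_2\neq 0$ stated in the theorem.

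The most delicate point I expect is justifying the integration by parts along the diagonal: $l$ is only absolutely continuous off $\{u=v\}$ while $l_*$ is a singular measure concentrated on it, so one must make sure the piecewise treatment of the two half-quadrants does not produce extra contributions from the diagonal. The tangency of $(1,1)$ to the diagonal is what saves the argument, but a careful limiting procedure (approximating the diagonal by a thin strip and letting its width tend to zero) is probably needed, together with mild decay assumptions on $p$ and $l$ at infinity; exchanging $\mathcal{D}_{t_1,t_2}$ with the spatial integrals is an additional but routine Fubini matter given the integrability properties implicit in $\phi$.
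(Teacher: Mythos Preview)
Your proposal is correct and follows essentially the same route as the paper: apply $\mathcal{D}_{t_1,t_2}$ to (\ref{densita subordinato}), use Theorem \ref{teorema equazione governante} to turn it into $-(\partial_u+\partial_v)l$ and $-\partial_u l_*$, integrate by parts, invoke (\ref{equazione markov 1})--(\ref{equazione Markov 3}) for the volume terms, and note that the boundary contributions are supported on $\{x_1=0\}\cup\{x_2=0\}$ via the delta initial data. The one place where you are actually more careful than the paper is the diagonal $\{u=v\}$: the paper integrates by parts in $u$ and $v$ separately over all of $(0,\infty)^2$ without discussing it (the two jump contributions that this would produce across the diagonal in fact cancel), whereas your use of the divergence theorem for the vector field $(1,1)pl$ on each half-quadrant, together with the tangency observation, sidesteps the issue entirely.
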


\begin{proof}
By applying $\mathcal{D}_{t_1, t_2}$ to both sides of (\ref{densita
subordinato}) and using Thm. \ref{teorema equazione governante} we have
\begin{align}
& \mathcal{D}_{t_1, t_2}h(x_1, x_2,t_1, t_2)  \notag \\
&=- \int _0^\infty \int _0^\infty p(x_1, x_2,u,v) \frac{\partial}{\partial u}
l(u,v,t_1, t_2)dudv  \notag \\
&- \int _0^\infty \int _0^\infty p(x_1, x_2,u,v) \frac{\partial}{\partial v}
l(u,v,t_1, t_2)dudv  \notag \\
& - \int _0^\infty p_*(x_1, x_2, u)\frac{\partial}{\partial u}l_*(u,t_1,
t_2) du .
\end{align}
Now, we integrate by parts and use (\ref{equazione markov 1}) and (\ref%
{equazione markov 2}), together with the fact that $(X_1(0), X_2(0))=(0,0)$
almost surely; thus we get
\begin{align*}
& \mathcal{D}_{t_1, t_2}h(x_1, x_2,t_1, t_2) \\
&= \mathcal{G}_1 \int _0^\infty \int _0^\infty p(x_1, x_2,u,v) l(u,v,t_1,
t_2)dudv +\delta (x_1) \int _0^\infty p_2(x_2, v) l(0,v,t_1t_2)dv + \\
&+ \mathcal{G}_2 \int _0^\infty \int _0^\infty p(x_1, x_2,u,v) l(u,v,t_1,
t_2)dudv +\delta (x_2) \int _0^\infty p_1(x_1, u) l(u,0,t_1t_2) du \\
& + (\mathcal{G}_1+\mathcal{G}_2) \int _0^\infty p_*(x_1, x_2, u)l_*(u,t_1,
t_2) du +\delta (x_1)\delta (x_2) \overline{\phi}(t_1, t_2)
\end{align*}
where $\overline{\phi}(t_1, t_2)$ has been defined in (\ref{coda misura di
levy 2d}). In the region $x_1\neq 0, x_2\neq 0$ we have
\begin{align*}
& \mathcal{D}_{t_1, t_2}h(x_1, x_2,t_1, t_2) \\
=& (\mathcal{G}_1 +\mathcal{G}_2) \int _0^\infty \int _0^\infty p(x_1,
x_2,u,v) l(u,v,t_1, t_2)dudv \\
& + (\mathcal{G}_1+\mathcal{G}_2) \int _0^\infty p_*(x_1, x_2, u) l_*(u,t_1,
t_2) du \\
& = (\mathcal{G}_1 +\mathcal{G}_2) h(x_1, x_2, t_1, t_2),
\end{align*}
which concludes the proof.
\end{proof}

\subsection{Time-change of continuous-time Markov chains.}

We specialize the framework of this section by considering the case where $%
X_1 (t)$ and $X_2(t)$ are independent continuous-time Markov chains such
that $(X_1(0), X_2(0))=(0,0)$ almost surely. In particular, they are defined
as
\begin{align}
X_l(t)=X_l^{(n)} \qquad T ^{(n)}_l \leq t < T^{(n+1)}_l \qquad l=1,2,
\end{align}
where $X_1^{(n)}$ and $X_2^{(n)}$, $n\in \mathbb{N}_0$, denote the embedded
Markov chains, with values on the discrete spaces $\mathcal{S}_1 \subset
\mathbb{R}$ and $\mathcal{S}_2\subset \mathbb{R}$ respectively, while all
the inter-arrival times $T^{(n+1)}_l- T ^{(n)}_l$ have exponential
distribution with mean $1/\xi _l$. The transition matrices of the embedded
chains are
\begin{align}
P(X_1^{(n+1)}=j | X_1^{(n)}=i):=A_{ij}, \qquad i,j\in \mathcal{S}_1
\label{matrice A}
\end{align}
and
\begin{align}
P(X_2^{(n+1)}=j | X_2^{(n)}=i):=B_{ij}, \qquad i,j\in \mathcal{S}_2.
\label{matrice B}
\end{align}
Let $\bigl ( L_1(t), L_2 (t) \bigr )$ be again the inverse of a bivariate
subordinator. We are interested in the time-changed process $\bigl ( X_1(
L_1(t)) , X_2( L_2 (t)) \bigr )$, with values in $\mathcal{S}_1\times
\mathcal{S}_2$, which is defined by
\begin{align*}
X_l(L_l(t))=X_l^{(n)} \qquad T ^{(n)}_l \leq L_l(t) < T^{(n+1)}_l, \qquad
l=1,2,
\end{align*}
or, equivalently, by
\begin{align*}
X_l(L_l(t))=X_l^{(n)} \qquad H_l(T ^{(n)}_l) \leq t < H_l(T^{(n+1)}_l),
\qquad l=1,2.
\end{align*}
We remark that, by Theorem \ref{equazione per il processo time-changed },
the density $p(x_1, x_2, t_1, t_2)$ of the random vector $\bigl ( %
X_1(L_1(t_1)), X_2(L_2(t_2)) \bigr )$ solves the following equation for $%
x_1, x_2\neq 0$:
\begin{align}
\mathcal{D}_{t_1t_2}p(x_1, x_2, t_1, t_2) &= \xi _1 \sum _{k\in S_1} \bigl (%
A_{k, x_1}-\delta _{k,x_1} \bigr )p(k, x_2,t_1,t_2)  \notag \\
&+\xi _2 \sum _{k\in S_2} \bigl (B_{k, x_2}-\delta _{k,x_2} \bigr )p(x_1,
k,t_1,t_2),  \label{equazione caso stepped}
\end{align}
where the matrices $A_{ij}$ and $B_{ij}$ have been defined in (\ref{matrice
A}) and (\ref{matrice B}).

Clearly, for each $l=1,2$, we have that $X_l(L_l(t))$ is a stepped
semi-Markov process with inter-arrival times $\Delta _{l,n}=
H_l(T^{(n+1)}_l) - H_l( T^{(n)}_l)$. This class of processes has been widely
studied in the literature concerning the classical time-change by inverse
subordinators (see, in particular \cite{meer toaldo} and \cite{ricciuti} for
the subordination of continuous time Markov chains). Since $H_l$ has
stationary increments, the random variables $\Delta _{l,n}$, are i.i.d.
copies of $H_l(W_l)$, where $W_l$ is exponentially distributed with mean $%
1/\xi _l$.

\begin{os}
It is important to note that the time-change by $(L_1, L_2)$ introduces
dependence between the marginal components. Indeed, on the one hand, the
Markov processes $X_1(t)$ and $X_2(t)$ are respectively characterized by the
inter-arrival times following the laws of the random variables $W_1$ and $%
W_2 $ cited above; on the other hand, the processes $X_1(L_1(t))$ and $%
X_2(L_2(t))$ exhibit new inter-arrival times respectively distribuited as $%
H_1(W_1)$ and $H_2(W_2)$ (which are dependent because of the dependence
between $H_1$ and $H_2$).

The survival functions of $H_i(W_i)$, $i=1,2$ satisfy the following relation
\begin{align}
P( H_i(W_i)>t)=\mathbb{E} e^{-\xi _i L_i(t)}.  \label{ttttt}
\end{align}
For example, in the special case where $L_i(t)$ is the inverse of a stable
subordinator with index $\alpha \in (0,1)$, the following explicit
expression holds
\begin{align}
P( H_i(W_i)>t)=\mathcal{E}_\alpha(-\xi _it^\alpha),  \label{mittagleffler}
\end{align}
where $\mathcal{E}_\alpha (x)= \sum _{k=0}^\infty \frac{x^k}{\Gamma
(1+\alpha k)}$ is the one-parameter Mittag-Leffler function.

It is interesting to note that the couple of dependent inter-arrival times $%
\bigl ( H_1(W_1), H_2(W_2) \bigr )$ has joint distribution such that
\begin{align*}
P(H_1(W_1)>t _1, H_2(W_2)>t_2)= \mathbb{E}e^{-\xi _1 L_1(t_1)-\xi _2
L_2(t_2)},
\end{align*}
which clearly generalizes (\ref{ttttt}). The proof is straightforward:
\begin{align*}
&P(H_1 (W_1)>t_1, H_2(W_2)>t_2) \\
&= \xi _1\xi_2 \int _0^\infty \int _0^\infty P(H_1(w_1)>t_1, H_2(w_2)>t_2)
e^{-\xi _1w_1}e^{-\xi_2 w_2}dw_1dw_2 \\
&= \xi _1\xi_2 \int _0^\infty \int _0^\infty P(L_1(t_1)<w_1, L_2(t_2)<w_2)
e^{-\xi _1w_1}e^{-\xi_2 w_2}dw_1dw_2 \\
&= \int _0^\infty \int _0^\infty P(L_1(t_1)\in dw_1, L_2(t_2)\in dw_2)
e^{-\xi _1w_1}e^{-\xi_2 w_2} \\
&= \mathbb{E}e^{-\xi _1 L_1(t_1)-\xi_2 L_2(t_2)}.
\end{align*}
\end{os}

\subsubsection{Bivariate fractional Poisson processes}

Let us consider the special case where $X_1(t)$ and $X_2(t)$ are Poisson
processes $N_1(t)$ and $N_2(t)$ with intensities $\xi _1$ and $\xi _2$,
while $(L_1^\alpha (t), L_2^\alpha (t))$ is the inverse of a bivariate
stable subordinator with index $\alpha \in (0,1)$ (defined in Subsection \ref%
{esempio stabile}).

Then, for each $i=1,2$, $N_i(L_i(t))$ is a fractional Poisson process, i.e.
a counting renewal process with Mittag-Leffler inter-arrival times of type (%
\ref{mittagleffler}) (see e.g. \cite{beghin orsingher 1}, \cite{beghin
orsingher 2}, \cite{gorenflo}, \cite{laskin}, \cite{meer nane} and also \cite%
{beghinricciuti}, \cite{leonenko}, \cite{vell} for recent extensions). The
discrete density $p_i(k,t)= P(N_i(L_i(t))=k)$ is known to solve
\begin{align}
\frac{d^\alpha}{dt^\alpha} p_i(k,t) = -\xi _i p_i(k,t)+\xi _i p_{i}(k-1,t),
\qquad p_i(k,0)= \delta _0(k) \qquad k\in \mathbb{N}_0,  \label{poisson}
\end{align}
where the fractional derivative has been defined in (\ref{Caputo}).

Now, the process $\bigl (N_1(L_1^\alpha (t)), N_2(L_2^\alpha (t) \bigr )$ is
a bivariate renewal counting process in the sense of \cite{hunter} and turns
out to be an interesting bivariate generalization of the fractional Poisson
process (other multivariate extensions had been recently proposed in \cite%
{beghin macci 1} and \cite{beghin macci 2}). In this case, equation (\ref%
{equazione caso stepped}) has the form
\begin{align}
\mathcal{D}_{t_1t_2}^\alpha p(k_1, k_2, t_1, t_2) &= -\xi _1p(k_1, k_2, t_1,
t_2)+ \xi _1 p(k_1-1, k_2, t_1, t_2)  \label{nuova poisson} \\
& -\xi_2 p(k_1, k_2, t_1, t_2) + \xi _2p(k_1, k_2-1, t_1, t_2), \qquad k_1,
k_2 \in \mathbb{N}_0,  \notag
\end{align}
under the initial condition $p(k_1, k_2, 0,0)= \delta _0(k_1)\delta _0(k_2)$%
. Eq (\ref{nuova poisson}) is a generalization of (\ref{poisson}).

\section{CTRW limits and anomalous diffusion in anisotropic media}

Let $\bigr (B_1(t), B_2(t) \bigl )$ be a standard bi-dimensional Brownian
motion and let $L^\alpha (t)$ be the inverse of an independent univariate
stable subordinator of index $\alpha \in (0,1)$. The time-changed process
\begin{align}
\{ \bigr (B_1(L^\alpha (t)), B_2(L ^\alpha (t)) \bigl ), t\geq 0 \}
\label{isotropic}
\end{align}
has great importance in statistical physics, since it models the so-called
anomalous diffusion (see, for example, \cite{magdziarz 3}, \cite{magdziarz},
\cite{magdziarz 2}, \cite{metzler} and also \cite{d'ovidio} for recent
developments). The term "anomalous" refers to the fact that the moving
particle is subject to a sort of trapping effect which, in some sense,
delays the time with respect to what happens for $\bigr (B_1(t), B_2(t) %
\bigl )$. Indeed, the mean square displacement grows as $t^\alpha$, i.e.
slower with respect to the Brownian motion. Such a process is known to have
a density $p(x_1, x_2, t)$ solving the equation
\begin{align}
\frac{\partial ^\alpha}{\partial t^\alpha } p(x_1, x_2, t)= \frac{1}{2}
\Delta p(x_1, x_2, t ),  \label{diffusione anomala classica}
\end{align}
where the operator on the left-hand side is the Caputo derivative (see (\ref%
{Caputo})).

It is important to note that the above model assumes that the trapping
effect is the same along both the coordinate directions, i.e. the
time-changed process is isotropic as well as $\bigr (B_1(t), B_2(t) \bigl )$%
. Indeed, the time Laplace transform of the characteristic function of (\ref%
{isotropic}) has the form
\begin{align*}
\int _0^{\infty} e^{-\eta t} \mathbb{E}[e^{i \bigl (\xi _1 B_1(L^\alpha(t))
+\xi_2 B_2(L^\alpha(t)) \bigr)}] dt = \frac{\eta ^{\alpha -1}}{\eta ^\alpha+
(\xi _1^2+ \xi _2^2)/2}, \qquad \xi_1 , \xi _2 \in\mathbb{R}, \eta \geq 0
\end{align*}
which is rotationally invariant in the variables $\xi _1, \xi _2$.

Here we present a model of motion in an anisotropic medium, such that the
trapping effect depends on the coordinate direction. Let $\bigr (B_1(t),
B_2(t) \bigl )$ be a standard bi-dimensional Brownian motion and let $\bigl (%
L_1^\alpha (t), L_2^\alpha (t)\bigr)$ be the inverse of a bivariate stable
subordinator with L\'evy measure $\phi_{\alpha}$ defined in (\ref{misura di
levy caso stabile}); we construct the time-changed process
\begin{align}
\{\bigr (B_1(L_1^\alpha (t)), B_2(L_2 ^\alpha(t)) \bigl ), t\geq 0\} .
\label{anisotropic}
\end{align}
We remark that the trajectories of the models (\ref{isotropic}) and (\ref%
{anisotropic}) have a different behaviour. In fact, in the isotropic case (%
\ref{isotropic}) the test particle stays at rest simultaneously in both
directions at the intervals in which the random time $L^\alpha(t)$ has a
plateaux, while in (\ref{anisotropic}) the plateaux of $L_1^\alpha$ and $%
L_2^\alpha$ are not in general synchronized. For this reason we call this
model anisotropic sub-diffusion. In conclusion, the motion for the second
model (\ref{anisotropic}) is given by the alternation of four different
phases:

i) The particle is at rest in the $x$ direction but moves in the $y$
direction (when only $L_1^\alpha$ has a plateaux).

ii) The particle is at rest in the $y$ direction but moves in the $x$
direction (when only $L_2^\alpha$ has a plateaux).

iii) The particle stays at rest in both direction (when both $L_1^\alpha$
and $L_2^\alpha$ have a plateaux).

iv) The particle moves in both directions (otherwise).

\begin{os}
Our model of anisotropic sub-diffusion is somewhat related to recent models
of inhomogenous subdiffusion, which is governed by a time fractional
equation of space-depending order
\begin{align*}
\frac{\partial ^ {\alpha (x)} }{\partial t^ {\alpha (x)}} p(x,t)=\frac{1}{2}
\Delta p(x,t) \qquad x\in \mathbb{R}^d.
\end{align*}
Such equation has been recently studied in \cite{kian1}, \cite{kian2}, \cite%
{ricciuti} and \cite{mladen}. In this model the intensity of the trapping
effect depends on the space position rather than on the direction. In
particular, in \cite{ricciuti} and \cite{mladen} the inhomogeneous
subdiffusion is constructed by means of a particular time-change such that
the inverse subordinator and the Brownian motion exhibit a suitable
stochastic dependence.
\end{os}

According to Theorem \ref{equazione per il processo time-changed }, the
random vector $(B_1(L_1(t_1)), B_2(L_2(t_2))$ has a density $p(x_1, x_2,
t_1, t_2)$ which solves the equation
\begin{align*}
\mathcal{D}_{t_1, t_2}^\alpha p(x_1, x_2, , t_1, t_2)=\frac{1}{2} \Delta
p(x_1, x_2, , t_1, t_2) \qquad (x_1, x_2)\in \mathbb{R}^2\setminus \{(0,0)\};
\end{align*}
this is a generalization of (\ref{diffusione anomala classica}), where the
operator on the left-hand side is defined in (\ref{pseudo}). The
time-Laplace transform of the characteristic function can be computed by
using (\ref{laplace inverso}) and a simple conditioning argument:
\begin{align*}
& \int _0^\infty \int _0^\infty e^{-\eta _1t_1-\eta _2t_2}\mathbb{E}[e^ {i
(\xi _1 B_1(L_1(t_1)) + \xi _2 B_2 (L_2(t_2) ) ) }] dt_1dt_2 \\
&= \int _0^\infty \int _0^\infty e^{-\eta _1t_1-\eta _2t_2} \mathbb{E} [e^{ -%
\frac{1}{2} \xi _1^2 L_1(t_1) -\frac{1}{2} \xi _2^2 L_2(t_2)}] dt_1dt_2 \\
=&\frac{\eta _1^\alpha \eta _2^\alpha}{\eta _{1}\eta _{2}\left[ \xi _{1}^2
/2+\eta _1^\alpha \right] \left[ \xi _{2}^2/2+\eta _2^\alpha \right] }+ \\
&+\frac{ \xi _{1}^2 \xi _{2}^2\left[\eta _1^\alpha+\eta
_2^\alpha-S_{\alpha}(\eta _{1},\eta _{1})\right] }{4\eta _{1}\eta _{2}\left[
\xi _{1}^2 /2+\eta _1^\alpha\right] \left[ \xi _{2}^2/2+\eta _2^\alpha\right]
\left[ \xi _{1}^2 /2+ \xi _{2}^2/2+S_{\alpha}(\eta _{1},\eta _{1})\right] },
\end{align*}
where $S_{\alpha}$ has been defined in (\ref{lap}).

\subsection{Functional limit results}

As said in the introduction, a well established theory (see e.g. \cite%
{becker}, \cite{meer libro}, \cite{meer spa}, \cite{meer jap}, \cite{meer
straka}, \cite{straka 2}) shows that (\ref{isotropic}) arises as CTRW
scaling limit. For the reader's convenience we first report the main aspects
of such a theory (in the particular case of bi-dimesional CTRW); then we
prove that, by constructing another suitable class of CTRWs, also
anisotropic subdiffusion (\ref{anisotropic}) arises as a particular scaling
limit.

Let $(Y_i^1, Y_i^2), i=1\dots, n$, be i.i.d. random vectors in $\mathbb{R}^2$
representing the jumps of a particle and let $S_n= \sum _{k=1}^n (Y^1_k,
Y_k^2)$ be the discrete-time random walk giving the location of the particle
after $n$ jumps, where $S_0=(0,0)$. Moreover, let $J_i\in \mathbb{R}^+$ be
i.i.d. random variables representing the inter-arrival times between
consecutive jumps and let $T_n= \sum _{k=1}^n J_k$ be the time of the $n$-th
jump.

Let $N(t)= \max \{n\geq 0: T_n\leq t\}$ be a renewal counting process giving
the number of jumps up to time $t\in \mathbb{R}^+$. A CTRW is defined as the
time-changed process
\begin{align}
S_{N(t)}= \sum _{k=1}^{N(t)} (Y^1_k, Y_k^2), \qquad t\in \mathbb{R}^+,
\label{sssssss}
\end{align}
namely a process with jumps $(Y^1_k, Y^2_k)$ separated by inter-arrival
times $J_k$.

We now recall some functional limit results concerning the case of infinite
mean inter-arrival times. Assume that the random variables $J_i$ belong to
the domain of attraction of a positively skewed stable law with index $%
\alpha \in (0,1)$. Then
\begin{align*}
c^{-\frac{1}{\alpha}}T_{[ct]}\to H^\alpha(t) , \qquad \text{as} \,\, c\to
\infty
\end{align*}
in the $J_1$ topology, where $H^\alpha(t)$ is a $\alpha$-stable subordinator
and $[x]$ denotes the biggest integer less than (or equal to) $x$. Moreover
\begin{align*}
c^{-\alpha}N(ct) \to L^\alpha(t), \qquad \text{as} \,\, c\to \infty
\end{align*}
in the $M_1$ topology, where $L^\alpha(t)= \inf \{ x: H^\alpha(x)>t\}$ is an
inverse $\alpha $ stable subordinator. Moreover, if the $(Y^1_i, Y^2_i)$
belong to the domain of attraction of a standard normal law, then $c^{-\frac{%
1}{2}}S_{[ct]}$ converges to a standard Brownian motion $(B_1(t), B_2(t))$
in $J_1$ topology. Then, by combining the above results,
\begin{align*}
c^{-\frac{\alpha}{2}}S_{N([ct])} \to \bigl (B_1(L^\alpha(t)),
B_2(L^\alpha(t) \bigr ) \qquad \text{as}\, \, c\to \infty
\end{align*}
in the $M_1$ topology, where the limit process is the subdiffusion defined
by (\ref{isotropic}).

In the following subsection, we show how it is possible to construct a CTRW
converging to the new process (\ref{anisotropic}).

\subsubsection{Limits of CTRWs driven by bivariate renewal processes}

Let
\begin{equation*}
(J_1^1, J_1^2), \dots, (J_n^1, J_n^2)
\end{equation*}
be i.i.d. random vectors in $\mathbb{R}^2_+$ (such that, for each $i$, $%
J_i^1 $ and $J_i^2$ are possibly dependent) and let $(T_n^1, T_n^2)$ be the
simple random walk on $\mathbb{R}_+^2$ defined by
\begin{align}
(T_n^1, T_n^2)= \biggl (\sum _{k=1}^nJ_k^1\, ,\, \sum _{k=1}^n J_k^2 \biggr) %
\qquad n\in \mathbb{N}_0.  \label{ttt}
\end{align}
Let $(N^1(t), N^2(t))$ be a bivariate renewal counting process in the sense
of \cite{hunter}, namely, for each $i=1,2$,
\begin{align}
N^i(t)= \max \{ n\in \mathbb{N}_0: T_n^i \leq t \} \qquad t\in \mathbb{R}^+.
\label{definizione bivariate renewal}
\end{align}
$N^1(t)$ and $N^2(t)$ are possibly dependent. Consider now another random
walk $(S_n^1, S_n^2)$, defined by
\begin{align}
(S_n^1, S_n^2)= \biggl (\sum _{k=1}^nY_k^1\, , \, \sum _{k=1}^n Y_k^2 %
\biggr) \qquad n\in \mathbb{N}_0,  \label{random walk nostro}
\end{align}
where $(Y_k^1, Y_k^2), k=1, \dots, n,$ are i.i.d random vectors in $\mathbb{R%
}^2$. \newline
We finally consider the CTRW defined by the bivariate time-change
\begin{align}
(S ^1 _{N ^1 (t)} , S ^2_{N^2(t)})= \biggl ( \sum _{k=1}^{N^1(t)} Y_k^1,
\sum _{k=1}^{N^2(t)} Y_k^2 \biggr )\ \qquad t\in \mathbb{R}^+.
\label{CTRW nostro}
\end{align}

We note that the marginal components of (\ref{CTRW nostro}) do not have
simultaneous jumps (unlike the ones of (\ref{sssssss})). Moreover, if
considered separately, both $S ^1 _{N ^1 (t)}$ and $S ^2_{N^2(t)}$ are CTRWs
of the type already treated in the literature and then we can refer to the
classical theory to perform their scaling limits.

The novelty of the following theorem is to present functional limit results
in the case of bivariate time-change. We will denote by $\overset{J_1}{\to}$
(resp. $\overset{M_1}{\to}$) the convergence in $J_1$ (resp. $M_1$) topology
on the space $D^2([0,\infty))$.

\begin{te}
\label{teorema sulla convergenza} Let $(J_k^1, J_k^2)$ belong to the domain
of attraction of a bivariate stable law with index $\alpha \in (0,1)$ and
support on $\mathbb{R}_2^+$. Then
\begin{align}
c^{-\frac{1}{\alpha}} (T^1_{[ct]}, T^2_{[ct]}) \overset{J_1}{\to}
(H_1^\alpha(t), H_2^\alpha (t)), \qquad \text{as}\,\, c\to \infty
\label{primo limite}
\end{align}
and
\begin{align}
\bigl (c^{-\alpha}N_1 (ct), c^{-\alpha}N_2 (c t) \bigr ) \overset{M_1}{\to}
(L_1^\alpha(t), L_2^\alpha (t)), \qquad \text{as}\,\, c\to \infty,
\label{secondo limite}
\end{align}
where $(H_1 ^\alpha(t), H_2^\alpha (t))$ is a bivariate stable subordinator
(see Subsection \ref{esempio stabile}) and $(L_1 ^\alpha(t), L_2^\alpha (t))$
is its inverse (in the sense explained in section 3). Moreover, if the
random vectors $(Y^1_i, Y^2_i)$ in (\ref{random walk nostro}) belong to the
domain of attraction of a standard normal law, we have
\begin{align}
(c^{-\frac{\alpha}{2}}S ^1 _{N ^1 ([ct])} , \, c^{-\frac{\alpha}{2}}
S^2_{N^2([ct])}) \overset{M_1}{\to} \bigl (B_1(L_1 ^\alpha(t)), B_2(L_2
^\alpha (t) \bigr) \qquad \text{as}\,\, c\to \infty  \label{s3}
\end{align}
where $(B_1(t), B_2(t) )$ denotes a standard Brownian motion.
\end{te}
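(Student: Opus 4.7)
The plan is to establish the three convergences in order, each deduced from the previous one via a continuous mapping argument.

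For (\ref{primo limite}), I would invoke the standard multivariate functional limit theorem for sums of i.i.d.\ random vectors in the domain of attraction of a stable law (see, e.g., \cite{meer libro}, Chapt.~4, or \cite{meer spa}). The assumption that $(J_k^1, J_k^2)$ lies in the domain of attraction of a bivariate $\alpha$-stable law supported on $\mathbb{R}_+^2$ immediately gives the asserted $J_1$-convergence of $c^{-1/\alpha}(T^1_{[ct]}, T^2_{[ct]})$ to a bivariate $\alpha$-stable subordinator, with the non-decreasing property of the limit components inherited from the positivity of the jumps.

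For (\ref{secondo limite}), I would use the inverse continuous mapping theorem: the first-passage functional $H \mapsto \inf\{x>0 : H(x) > \,\cdot\,\}$ is continuous from $(D([0,\infty)), J_1)$ to $(D([0,\infty)), M_1)$ at trajectories that are a.s.\ strictly increasing and unbounded (see \cite{meer straka}, \cite{meer spa}). Since each marginal $H_i^\alpha$ is an infinite-activity subordinator, its paths have this property almost surely. The convergence therefore propagates component-wise, and joint $M_1$-convergence on $D^2([0,\infty))$ follows from the joint $J_1$-convergence of the underlying partial sums in (\ref{primo limite}), together with the fact that the inversion is performed coordinatewise. The $\pm 1$ shifts coming from the rounding in the definition of $N^i$ are absorbed in the scaling.

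For (\ref{s3}), Donsker's theorem applied to the i.i.d.\ jumps $(Y_k^1, Y_k^2)$ gives $c^{-1/2}(S^1_{[ct]}, S^2_{[ct]}) \overset{J_1}{\to}(B_1(t), B_2(t))$ as $c\to\infty$, with a continuous limit. Since $(Y^1, Y^2)$ is independent of $(J^1, J^2)$, this convergence holds jointly with (\ref{secondo limite}) in the appropriate product topology. Applying the continuous mapping theorem to the component-wise composition $(x,y) \mapsto (x_1 \circ y_1, x_2 \circ y_2)$, which is continuous at pairs where the $x_i$ are continuous and the $y_i$ are non-decreasing (cf.\ \cite{meer straka}, \cite{becker}), yields (\ref{s3}).

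The main technical obstacle is the last step: in the bivariate $M_1$ topology on $D^2([0,\infty))$, joint convergence is genuinely stronger than marginal convergence, and the composition map requires care because the two inverse subordinators $L_1^\alpha, L_2^\alpha$ typically have plateaux at distinct times. The argument goes through thanks to the continuity of the outer process $(B_1, B_2)$: the composition of a continuous function with an $M_1$-convergent monotone process is again $M_1$-convergent, and this property propagates component-wise, as in the one-dimensional CTRW limit theorems of \cite{meer straka} and \cite{becker}.
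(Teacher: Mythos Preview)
Your proposal is correct and follows essentially the same three-step continuous-mapping strategy as the paper's proof. The only noteworthy difference is bibliographic: for step~(\ref{secondo limite}) and step~(\ref{s3}) the paper invokes results from \cite{silvestrov} (specifically Thm.~1.6.13 for the bivariate renewal inversion and Section~2.7 for vector composition), which handle the joint $M_1$-convergence on $D^2([0,\infty))$ directly, whereas you piece the argument together from the one-dimensional CTRW references \cite{meer straka}, \cite{meer spa}, \cite{becker} and argue coordinatewise. Your awareness of the joint-versus-marginal issue in the last paragraph is exactly the gap that the Silvestrov citations are meant to fill cleanly; otherwise the two arguments coincide.
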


\begin{proof}
Since $(J_k^1, J_k^2)$ belong to the domain of attraction of a bivariate $%
\alpha$-stable law, then, by Thm 6.21 at page 168 of \cite{meer libro}, we
have
\begin{align}
c^{-\frac{1}{\alpha}} (T^1_{[ct]}, T^2_{[ct]}) \overset{d}{\to}
(H_1^\alpha(t), H_2^\alpha(t)) \qquad \text{ for all}\,\, t\geq 0, \qquad
\text{as} \,\, c\to \infty,  \label{s1}
\end{align}
where $\overset{d}{\to}$ denotes convergence in distribution. Then, by using
\cite{skorohod}, Thm. 2.7, condition (\ref{s1}) implies (\ref{primo limite}).

The bivariate renewal process $(N^1(t), N^2(t))$ is a continuous functional
in $M_1$ topology; then the continuous mapping argument provided by Thm.
1.6.13 in \cite{silvestrov} (page 56) gives (\ref{secondo limite}).

We finally observe that the set of discontinuities of the limit processes $%
(B_1(t), B_2(t))$ and $(H_1^{\alpha}(t), H_2^{\alpha} (t))$ are obviously
disjoint as the Brownian motion has continuous sample paths; hence, another
continuous mapping argument (i.e. a vector composition of processes in the
sense of \cite{silvestrov}, Section 2.7) leads to (\ref{s3}).
\end{proof}

\begin{os}
Theorem \ref{teorema sulla convergenza} is restricted to the case where the
inter-arrival times belong to the domain of attraction of a stable law,
while the jumps belong to the domain of attraction of a normal distribution.
We trust that a more general result could be obtained. Indeed, by using
triangular array convergence, one could remove such assumptions and study
the case where a rescaled CTRW converges to a L\'evy process time-changed by
the inverse of a general bivariate subordinator. However, in this case,
there are some difficulties in treating simultaneous jumps of the components
of the limit processes. So, a further analysis is needed on these points.
\end{os}

\vspace{0.2cm}

\textbf{Acknowledgements.} The authors thank the referee for his/her careful
reading of the first version of the manuscript. Moreover they also thank
Patrizia Semeraro for useful comments on her papers cited in the
bibliography, Dmitrii Silvestrov for useful discussion on the $M_1$
convergence, and Fabrizio Durante for suggesting the references \cite%
{EsmaeiliKluppelberg2011} and \cite{EsmaeiliKluppelberg2013}.

The authors acknowledge the support of GNAMPA-INdAM. Moreover C.M.
acknowledges the support of MIUR Excellence Department Project awarded to
the Department of Mathematics, University of Rome Tor Vergata (CUP
E83C18000100006).

\end{document}